\newcommand\N{{\mathbb N}}
\newcommand\R{{\mathbb R}}
\newcommand\C{{\mathbb C}}
\newcommand\Z{{\mathbb Z}}
\def\AA{{\mathcal A}}
\def\BB{{\mathcal B}}
\def\CC{{\mathcal C}}
\def\DD{{\mathcal D}}
\def\EE{{\mathcal E}}
\def\LL{{\mathcal L}}
\def\NN{{\mathcal N}}
\def\SS{{\mathcal S}}
\def\BBB{{\mathscr B}}
\def\CCC{{\mathscr C}}
\def\eps{{\varepsilon}}
\newcommand{\la}{\langle}
\newcommand{\ra}{\rangle}
\newtheorem{thm}{Theorem}[section]
\newtheorem{prop}[thm]{Proposition}
\newtheorem{lem}[thm]{Lemma}
\newtheorem{cor}[thm]{Corollary}
\theoremstyle{remark}
\newtheorem{rem}[thm]{Remark}
\theoremstyle{definition}
\newtheorem{definition}[thm]{Definition}
\numberwithin{equation}{section}
\newcommand{\beqn}{\begin{equation}}
\newcommand{\eeqn}{\end{equation}}
\newcommand{\bear}{\begin{eqnarray}}
\newcommand{\eear}{\end{eqnarray}}
\newcommand{\bean}{\begin{eqnarray*}}
\newcommand{\eean}{\end{eqnarray*}}
\newcommand{\bal}{\begin{aligned}}
\newcommand{\eal}{\end{aligned}}
\newcommand{\Black}{\color{black}}
\title[Exponential convergence for the Landau equation]{Exponential convergence to equilibrium for the homogeneous Landau equation with hard potentials}
\author{Kleber Carrapatoso}
\address{Ceremade, Universit\'e Paris Dauphine\\
Place du Mar\'echal De Lattre De Tassigny\\
75775 Paris cedex 16, France.}
\curraddr{CMLA, \'Ecole Normale Sup\'erieure de Cachan\\
61 av. du pr\'esident Wilson\\
94235 Cachan, France.}
\email{carrapatoso@cmla.ens-cachan.fr}
\subjclass[2000]{47H20, 76P05, 82B40, 35K55}
\keywords{Landau equation; spectral gap; exponential decay; hypodissipativity; hard potentials}
\begin{document}

\begin{abstract}
This paper deals with the long time behaviour of solutions to the spatially homogeneous Landau equation with hard potentials. We prove an exponential in time convergence towards the equilibrium with the optimal rate given by the spectral gap of the associated linearised operator. This result improves the polynomial in time convergence obtained by Desvillettes and Villani \cite{DesVi2}. Our approach is based on new decay estimates for the semigroup generated by the linearised Landau operator in weighted (polynomial or stretched exponential) $L^p$-spaces, using a method developed by Gualdani, Mischler and Mouhot \cite{GMM}.
\end{abstract}

\maketitle

\tableofcontents

\section{Introduction and main results}

This work deals with the asymptotic behaviour of solutions to the spatially homogeneous Landau equation for hard potentials. It is well known that these solutions converge towards the Maxwellian equilibrium when time goes to infinity and we are interested in quantitative rates of convergence.

\smallskip

On the one hand, in the case of Maxwellian molecules, Villani \cite{Vi1} and Desvillettes-Villani \cite{DesVi2} have proved a linear functional inequality between the entropy and entropy dissipation by constructive methods, from which one deduces an exponential convergence (with quantitative rate) of the solution to the Landau equation towards the Maxwellian equilibrium in relative entropy, which in turn implies an exponential convergence in $L^1$-distance (thanks to the Csisz\'ar-Kullback-Pinsker inequality).
This kind of linear functional inequality relating entropy and entropy dissipation is known as Cercignani's Conjecture in Boltzmann and Landau theory, for more details and a review of results we refer to \cite{DMV}.

On the other hand, in the case of hard potentials, Desvillettes-Villani \cite{DesVi2} proves a functional inequality for entropy-entropy dissipation that is not linear, from which one obtains a polynomial convergence of solutions towards the equilibrium, again in relative entropy, which implies the same type of convergence in $L^1$-distance.

\smallskip

Before going further on details of existing results and on the contributions of the present work, we shall introduce in a precise manner the problem addressed here.
In kinetic theory, the Landau equation is a model in plasma physics that describes the evolution of the density in the phase space of all positions and velocities of particles. Assuming that the density function does not depend on the position, we obtain the \emph{spatially homogeneous Landau equation} in the form
\beqn\label{eq:landau}
\left\{
\bal
\partial_t f  &=  Q(f,f)  \\
f_{|t=0} &= f_0 ,
\eal
\right.
\eeqn
where $f=f(t,v) \geq 0$ is the density of particles with velocity $v$ at time $t$, $v\in\R^3$ and $t\in\R^+$. The Landau operator $Q$ is a bilinear operator given by
\beqn\label{eq:oplandau0}
Q(g,f) = \partial_{i} \int_{\R^3} a_{ij}(v-v_*) \left[ g_* \partial_j f - f \partial_{j}g_*\right]\, dv_*,
\eeqn
where here and below we shall use the convention of implicit summation over repeated indices and we use the shorthand $g_* = g(v_*)$, $\partial_{j} g_* = \partial_{v_{*j}} g(v_*)$, $f=f(v)$ and $\partial_j f = \partial_{v_j} f(v)$. 

The matrix $a$ is nonnegative, symmetric and depends on the interaction between particles. If two particles interact with a potential proportional to $1/r^s$, where $r$ denotes their distance, $a$ is given by (see for instance \cite{Villani-BoltzmannBook})
\beqn\label{eq:aij}
a_{ij}(v) = |v|^{\gamma+2}\left( \delta_{ij} - \frac{v_i v_j}{|v|^2}\right),
\eeqn
with $\gamma=(s-4)/s$. We usually call hard potentials if $\gamma\in(0,1]$, Maxwellian molecules if $\gamma=0$, soft potentials if $\gamma \in (-3,0)$ and Coulombian potential if $\gamma=-3$. Through this paper we shall consider the case of hard potentials $\gamma\in (0,1]$.

\smallskip

The Landau equation conserves mass, momentum and energy. Indeed, at least formally, for any test function $\varphi$ we have (see e.g. \cite{Vi2})
$$
\int_{\R^3} Q(f,f) \varphi(v) \, dv = \frac12 \int_{\R^3 \times \R^3} a_{ij}(v-v_*) f f_* 
\left(\frac{\partial_i f}{f} -  \frac{\partial_{i} f_*}{f_*}  \right)\left( \partial_j \varphi - \partial_j \varphi_* \right) \, dv \, dv_*
$$
from which we deduce
\beqn\label{eq:cons}
\int Q(f,f) \varphi(v) = 0 \qquad\text{for}\qquad \varphi(v) = 1, v, |v|^2.
\eeqn
Moreover, the entropy $H(f)=\int f\log f$ is nonincreasing. Indeed, at least formally, since $a_{ij}$ is nonnegative, we have the following inequality for the entropy dissipation $D(f)$,
\begin{equation}\label{eq:Df}
\begin{aligned}
D(f) :&= -\frac{d}{dt} H(f) \\
&=
\frac{1}{2}\int_{\R^3\times \R^3} f f_* \,a_{ij}(v-v_*)
\left( \frac{\partial_i f}{f} -  \frac{\partial_{i*} f_*}{f_*} \right)
 \left( \frac{\partial_j f}{f} -  \frac{\partial_{j*} f_*}{f_*} \right) \,dv \,dv_*
\geq 0.
\end{aligned}
\end{equation}
It follows that any equilibrium is a Maxwellian distribution
$$
\mu_{\rho,u,T} (v) := \frac{\rho}{(2\pi T)^{3/2}} e^{-\frac{|v-u|^2}{2T}},
$$
for some $\rho >0$, $u\in \R^3$ and $T>0$. This is the Landau version of the famous Boltzmann's $H$-theorem (for more details we refer to \cite{DesVi2,Vi1} again), from which the solution $f(t,\cdot)$ of the Landau equation is expected to converge towards the Maxwellian $\mu_{\rho_f, u_f, T_f}$ when $t\to +\infty$, where $\rho_f$ is the density of the gas, $u_f$ the mean velocity and $T_f$ the temperature, defined by
$$
\rho_f = \int f(v), \quad
u_f = \frac{1}{\rho} \int v f(v),\quad
T_f = \frac{1}{3\rho}\int |v-u|^2 f(v),
$$
and these quantities are defined by the initial datum $f_0$ thanks to the conservation properties of the Landau operator \eqref{eq:cons}. 

We may only consider the case of initial datum $f_0$ satisfying
\beqn\label{f0}
\int_{\R^3} f_0(v)\, dv = 1, \quad 
\int_{\R^3} v f_0(v)\, dv = 0, \quad
\int_{\R^3} |v|^2 f_0(v) \, dv = 3,
\eeqn
the general case being reduced to \eqref{f0} by a simple change of coordinates (see \cite{DesVi2}).
Then, we shall denote $\mu(v) = (2\pi)^{-3/2} e^{-|v|^2/2}$ the standard Gaussian distribution in $\R^3$, which corresponds to the Maxwellian with $\rho=1$, $u=0$ and $T=1$, i.e. the Maxwellian with same mass, momentum and energy of $f_0$ \eqref{f0}.

\smallskip

We linearise the Landau equation around $\mu$, with the perturbation 
$$
f=\mu + h,
$$
hence the equation satisfied by $h=h(t,v)$ takes the form
\beqn\label{eq:lin}
\partial_t h = \LL h + Q(h,h),
\eeqn
with initial datum $h_0$ defined by $h_0 =f_0 - \mu$, and 
where the linearised Landau operator $\LL$ is given by
\beqn\label{eq:oplandaulin}
\LL h= Q(\mu,h)+Q(h,\mu).
\eeqn
Furthermore, from the conservations properties \eqref{eq:cons}, we observe that the null space of $\LL$ has dimension $5$ and is given by (see e.g. \cite{DL,Guo,BM,M,MS})
\beqn\label{eq:kerLL}
\NN(\LL)  = \mathrm{Span} \{\mu, v_1\mu,v_2 \mu, v_3\mu, |v|^2 \mu\}.
\eeqn

\medskip

\subsection{Known results}\label{ssec:known}
We present here existing results concerning spectral gap estimates for the linearised operator and convergence to equilibrium for the nonlinear equation.

For any weight function $m=m(v)$ ($m : \R^3 \to \R^+$) we define the weighted Lebesgue space $L^p(m)$, for $p\in[1,+\infty]$, associated to the norm
$$
\| f\|_{L^p(m)} := \| m f \|_{L^p},
$$
and the weighted Sobolev spaces $W^{s,p}(m)$ for $s\in \N$, associated to the norm
$$
\bal
\| f\|_{W^{s,p}(m)} &:= \left( \sum_{|\alpha|\leq s}\| \partial^\alpha f \|_{L^p(m)}^p \right)^{1/p}, \quad\text{if } p\in [1,+\infty), \\
\| f \|_{W^{s,\infty}(m)} &:= \sup_{|\alpha| \leq s} \| \partial^\alpha f \|_{L^\infty(m)}    .
\eal
$$
We denote by $\DD$ the Dirichlet form associated to $-\LL$ on $L^2(\mu^{-1/2})$,
$$
\DD(h) := \la -\LL h, h \ra_{L^2(\mu^{-1/2})} := \int (-\LL h) h \mu^{-1},
$$
and we say that $h \in \NN(\LL)^{\perp}$, where $\NN(\LL)$ denotes the nullspace of $\LL$, if $h$ is of the form $h = h - \Pi h$, where $\Pi$ denotes the projection onto the null space.
It is easy to observe that $\LL$ is self-adjoint on $L^2(\mu^{-1/2})$ and $\DD(h) \ge 0$, which implies that the spectrum of $\LL$ on $L^2(\mu^{-1/2})$ is included in $\R^-$.

We can now state the existing results on the spectral gap of $\LL$ on $L^2(\mu^{-1/2})$.
The spectral gap inequality for the linearised Landau operator for hard potentials $\gamma \in (0,1]$,
\beqn\label{eq:GapDLandauBM}
\DD(h)  
\geq \lambda_0 \, \|  h \|_{L^2(\mu^{-1/2})}^2, \quad \forall\, h \in \NN(\LL)^{\perp},
\eeqn 
was proven by Baranger-Mouhot \cite{BM}, for some constructive constant $\lambda_0 >0$.

In the case of hard and soft potentials $\gamma \in (-3,1]$,  Mouhot \cite{M} proved the following result
\beqn\label{eq:GapDLandauM}
\DD(h) 
\geq \lambda_0 \left\{  \| h  \|_{H^1(\la v \ra^{\gamma/2} \mu^{-1/2})}^2 + \| h \|_{L^2(\la v \ra^{(\gamma+2)/2} \mu^{-1/2})}^2 \right\},  \quad 
\forall\,  h \in \NN(\LL)^\perp.
\eeqn

Furthermore, Guo \cite{Guo}, by nonconstructive arguments, and later Mouhot-Strain \cite{MS}, by constructive arguments, proved a spectral gap inequality for an anisotropic norm for the linearised Landau operator (in all cases: hard, soft and Coulombian potentials) $\gamma \in [-3,1]$,
\beqn\label{eq:GapDLandauMS}
\bal
 \DD(h) 
\geq \lambda_0 \| h \|_*^2,
\quad \forall\,  h \in \NN(\LL)^\perp,
\eal
\eeqn
with the anisotropic norm $\| \cdot \|_{*}$ defined by
$$
\| h \|_*^2 := \|\la v \ra^{\gamma/2} P_v \nabla h  \|_{L^2( \mu^{-1/2})}^2 
  +\|\la v \ra^{(\gamma+2)/2} (I-P_v) \nabla h  \|_{L^2(\mu^{-1/2})}^2 
  +\|\la v \ra^{(\gamma+2)/2}  h  \|_{L^2( \mu^{-1/2})}^2  
$$
where $P_v$ denotes the projection onto the $v$-direction, more precisely $P_v g = \left( \frac{v}{|v|}\cdot g \right) \frac{v}{|v|}$. We also have from \cite{Guo}, the reverse inequality
\beqn\label{eq:GapDLandauMS2}
\DD(h) \leq C_2 \| h \|_*^2, 
\quad \forall\,  h \in \NN(\LL)^\perp,
\eeqn
which, together with \eqref{eq:GapDLandauMS}, imply a spectral gap for $\LL$ in $L^2(\mu^{-1/2})$ if and only if $\gamma+2\geq 0$.

Summarising the results \eqref{eq:GapDLandauBM}, \eqref{eq:GapDLandauM} and \eqref{eq:GapDLandauMS}, in the case of hard potentials $\gamma \in (0,1]$ and Maxwellian molecules $\gamma=0$, there is a constructive constant $\lambda_0 >0$ (spectral gap) such that
\beqn\label{eq:lambda0bis}
 \DD(h) 
\geq \lambda_0 \| h \|_{L^2(\mu^{-1/2})}^2,
\quad \forall\,  h \in \NN(\LL)^\perp.
\eeqn
As a consequence, considering the linearised Landau equation $\partial_t h = \LL h$, we have an exponential decay 
\beqn\label{eq:lambda0}
\forall\, t\geq 0,\; \forall\, h\in L^2(\mu^{-1/2}), \qquad
\| \SS_{\LL}(t) h - \Pi h \|_{L^2(\mu^{-1/2})} \leq  e^{-\lambda_0 t} \| h - \Pi h\|_{L^2(\mu^{-1/2})},
\eeqn
where $\SS_{\LL}(t)$ denotes the semigroup generated by $\LL$ and $\Pi$ the projection onto $\NN(\LL)$, the null space of $\LL$ given by \eqref{eq:kerLL}.

\medskip

Another approach is to study directly the nonlinear equation, establishing functional inequalities between the entropy and the entropy dissipation.
The following entropy dissipation inequality for the (nonlinear) Landau operator for Maxwellian molecules $\gamma=0$  
\beqn\label{eq:dissipLandauDV}
D (f) \geq \delta_0 \, H(f| \mu ),   \quad \forall f \in L^1_{1,0,1}(\R^3) :=  \left\{f \in L^1(\R^3) ; \rho_f = 1, u_f = 0, T_f=1 \right\},
\eeqn
for some explicit constant $\delta_0$, was proven by Desvillettes-Villani \cite{DesVi2} and Villani \cite{Vi1}. Here $H (f | \mu) := \int f \log (f/\mu)$ denotes the relative entropy of $f$ with respect to $\mu$, and this inequality implies an exponential decay to the equilibrium $\mu$.
Taking $f = \mu + \eps h$, they also deduce a degenerated spectral gap inequality for the linearised Landau operator for $\gamma=0$,
\beqn\label{eq:GapDLandauH1}
\DD(h) \geq \bar\delta_0 \, \|\nabla h \|_{L^2(\mu^{-1/2})}^2 \quad \forall\,  h \in \NN(\LL)^\perp.
\eeqn

In the case of hard potentials $\gamma\in(0,1]$, Desvillettes-Villani \cite{DesVi2} proved the following entropy-entropy dissipation inequality, for some explicit $\delta_1, \delta_2 >0$,
\beqn\label{eq:dissipLandauDV2}
D (f) \geq \min\left\{ \delta_1 H(f|\mu), \delta_2 H(f|\mu)^{1+\gamma/2}    \right\}  \quad \forall f \in L^1_{1,0,1}(\R^3),
\eeqn
which implies a polynomial decay to equilibrium in relative entropy (see Theorem~\ref{thm:DV} for more details).

\medskip

As we can see above, the result \eqref{eq:dissipLandauDV2} tell us that the solution to the Landau equation converges to the equilibrium in polynomial time. Furthermore, from the exponential decay for the linearised equation \eqref{eq:lambda0bis}-\eqref{eq:lambda0}, we might expect that the solution to the nonlinear equation also decays exponentially in time if it lies in some neighbourhood of the equilibrium in which the linear part is dominant.
One could then expect  to prove an exponential convergence to equilibrium combining these to results: for small times one uses the polynomial decay, then for large times, when the solution enters in the appropriated neighbourhood of the equilibrium (in $L^2(\mu^{-1/2})$-norm), one uses the exponential decay. However these two theories, linear and nonlinear, are not compatible in the sense that the spectral gap for the linearised operator holds in $L^2(\mu^{-1/2})$ and the Cauchy theory \cite{DesVi1} for the nonlinear Landau equation is constructed in $L^1$-spaces with polynomial weight, which means that in order to apply the strategy above, starting from some initial datum in weighted $L^1$-space, one would need the appearance of the $L^2(\mu^{-1/2})$-norm of the solution in positive time to be able to use \eqref{eq:lambda0bis}-\eqref{eq:lambda0}, and this is not known to be true (one does not know even if the $L^2(\mu^{-1/2})$-norm is propagated).
Hence, in order to be able to "connect" the linearised theory with the nonlinear one, we need to enlarge the functional space of semigroup decay estimates generated by the linearised operator $\LL$.

\medskip

Our goal in this paper is to prove an \emph{(optimal) exponential in time convergence} of solutions to the Landau equation towards the equilibrium and our strategy is based on:

\begin{enumerate}[(1)]

\item New decay estimates for the semigroup generated by the linearised Landau operator $\LL$ in various $L^p$-spaces with polynomial and stretched exponential weight, using a method developed in \cite{GMM}.

\item The well-known Cauchy theory for the nonlinear equation developed in \cite{DesVi1,DesVi2}: the appearance and uniform propagation of $L^1$-polynomial moments, smoothing effect and the polynomial in time convergence to equilibrium.

\item The strategy of connecting the linearised theory with the nonlinear one, roughly presented in the above paragraph.

\end{enumerate}

\subsection{Statement of the main result}
Let us state our main result, which proves a sharp exponential decay to equilibrium for the spatially homogeneous Landau equation with hard potentials.

First of all we define the notion of weak solutions that we shall use.

\begin{definition}[Weak solutions \cite{DesVi1}]
Let $\gamma \in (0,1]$ and consider a nonnegative initial data with finite mass, momentum and energy $f_0 \in L^1 (\la v \ra^2) $. 
We say that $f$ is a weak solution of the Cauchy problem \eqref{eq:landau} if the following conditions are fulfilled:

\begin{enumerate}[($i$)]

\item $f\ge 0$, $f \in C ([0,\infty); \DD') \cap L^\infty([0,\infty); L^1(\la v \ra^2) ) \cap L^1_{loc}([0,\infty); L^1 (\la v \ra^{2+\gamma}))$;

\item for any $t\ge 0$
$$
\int f(t) |v|^2 \le \int f_0 |v|^2 
$$

\item $f$ verifies \eqref{eq:landau} in the distributional sense: for any $\varphi \in C^\infty_c([0,\infty) \times \R^3_v)$, for any $t \ge 0$,
$$
\int f(t) \varphi(t) - \int f_0 \varphi(0) - \int_0^t \int f(\tau) \partial_t \varphi(\tau)
= \int_0^t \int Q(f(\tau), f(\tau)) \varphi(\tau),
$$ 
where the last integral in the right-hand side is defined by 
$$
\int Q(f,f) \varphi = \frac12 \iint a_{ij}(v-v_*) (\partial_{ij} \varphi + \partial_{ij}\varphi_*) \, f_* f 
+ \iint b_{i}(v-v_*) (\partial_{i} \varphi - \partial_{i}\varphi_*) \, f_* f 
$$

\end{enumerate}

\end{definition}

It is proven in \cite{DesVi1} that if $f_0 \in L^1( \la v \ra^{2+\delta})$ for some $\delta >0$, then there exists a global weak solution.

\medskip

Our main theorem reads:

\begin{thm}[Exponential decay to equilibrium]\label{thm:rate}
Let $\gamma\in (0,1]$ and a nonnegative $f_0\in L^1(\la v \ra^{2+\delta})$ for some $\delta>0$, satisfying \eqref{f0}. Then, for any weak solution $(f_t)_{t\geq 0}$ to the spatially homogeneous Landau equation \eqref{eq:landau} with initial datum $f_0$, there exists a constant $C>0$ such that
$$
\forall\, t\geq 0, \qquad
\| f_t - \mu \|_{L^1} \leq C e^{-\lambda_0 t},
$$
where $\lambda_0>0$ is the spectral gap \eqref{eq:lambda0bis}-\eqref{eq:lambda0} of the linearised operator $\LL$ on $L^2(\mu^{-1/2})$.

\end{thm}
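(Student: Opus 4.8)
The plan is to combine three ingredients: (i) the exponential decay of the linearised semigroup in the enlarged space $L^1(\langle v\rangle^k)$ for $k$ large enough (or, as a first step, in some weighted $L^1$ or $L^2$ space with polynomial weight), which is proved in this paper via the factorisation/enlargement method of \cite{GMM}; (ii) the Cauchy theory of Desvillettes–Villani \cite{DesVi1,DesVi2}, namely global existence of weak solutions, appearance and uniform-in-time propagation of polynomial moments $\int f_t\langle v\rangle^k\,dv\le C_k$ for all $k$, and the smoothing effect $f_t\in H^\infty_{loc}$ for $t>0$; and (iii) the polynomial-in-time convergence to equilibrium $\|f_t-\mu\|_{L^1}\le C(1+t)^{-\infty}$ (equivalently faster than any polynomial) coming from \eqref{eq:dissipLandauDV2} together with the uniform moment bounds, which I will quote as Theorem~\ref{thm:DV}.

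First I would set $h_t=f_t-\mu$, so that $h$ solves \eqref{eq:lin}, $h_0=f_0-\mu$, and $\Pi h_t=0$ for all $t$ by the conservation laws \eqref{eq:cons} and the normalisation \eqref{f0}. The nonlinear term $Q(h,h)$ must be controlled as a source term: using the structure of $Q$ and integration by parts, one shows a bilinear estimate of the form $\|Q(g,h)\|_{L^1(\langle v\rangle^{k})}\lesssim \|g\|_{*}\|h\|_{**}$ where the norms on the right involve one or two more moments/derivatives and one extra power of $\langle v\rangle^{\gamma/2}$; the key point is that $Q$ gains integrability and loses at most a controlled amount of weight, and all the norms appearing are bounded uniformly in time by the Cauchy theory in (ii). Then I would write the Duhamel formula
$$
h_t = \SS_{\LL}(t)h_0 + \int_0^t \SS_{\LL}(t-s)\,Q(h_s,h_s)\,ds
$$
and estimate it in the enlarged norm $\|\cdot\|_{L^1(\langle v\rangle^{k})}$ using the decay $\|\SS_{\LL}(t)\|_{L^1(\langle v\rangle^{k})\to L^1(\langle v\rangle^{k})}\lesssim e^{-\lambda t}$ on $\NN(\LL)^\perp$ for any $\lambda<\lambda_0$ (the rate being the genuine spectral gap $\lambda_0$ by the enlargement argument, since $L^2(\mu^{-1/2})$ embeds in the enlarged space).

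The bootstrap is the heart of the argument. Fix a small $\eta>0$. By the polynomial convergence (iii), there is a time $T_0$ such that $\|h_{T_0}\|_{L^1(\langle v\rangle^{k})}\le\eta$ for a suitably large $k$ — here one must first upgrade the $L^1$-polynomial convergence of (iii) to convergence in the weighted norm, which follows by interpolating the (arbitrarily fast) $L^1$-decay against the uniform higher-moment bounds. Restarting the equation at $T_0$, I would run a continuation/fixed-point argument on $[T_0,\infty)$: assuming $\sup_{s\in[T_0,t]}e^{\lambda(s-T_0)}\|h_s\|_{L^1(\langle v\rangle^{k})}\le 2\eta$, the Duhamel estimate plus the bilinear bound gives
$$
e^{\lambda(t-T_0)}\|h_t\|_{L^1(\langle v\rangle^{k})}\ \le\ C\eta + C'\,(2\eta)^2 \cdot \sup_{s\in[T_0,t]} e^{\lambda(s-T_0)}\|h_s\|_{L^1(\langle v\rangle^{k})}\ \le\ C\eta + 2C'(2\eta)^2 ,
$$
using $\int_0^t e^{-\lambda(t-s)}e^{-\lambda(s-T_0)}\,ds = (t-T_0)e^{-\lambda(t-T_0)}$ absorbed into a slightly smaller rate, or by working with a rate $\lambda<\lambda_0$ so that the convolution $\int e^{-\lambda_0(t-s)}e^{-\lambda s}ds$ is genuinely $\lesssim e^{-\lambda t}$. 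Choosing $\eta$ small enough that $C\eta+8C'\eta^2\le 2\eta$ closes the bootstrap, so $\|h_t\|_{L^1(\langle v\rangle^{k})}\le 2\eta\,e^{-\lambda(t-T_0)}$ for all $t\ge T_0$; combined with boundedness on $[0,T_0]$ this yields $\|f_t-\mu\|_{L^1}\le\|h_t\|_{L^1(\langle v\rangle^{k})}\le Ce^{-\lambda t}$ for any $\lambda<\lambda_0$. Finally, to reach the optimal rate $\lambda_0$ exactly, I would feed this almost-optimal decay back into the Duhamel formula once more: the source term $Q(h_s,h_s)$ is now $O(e^{-2\lambda s})$ with $2\lambda>\lambda_0$, so the convolution with $\SS_\LL$ decays like $e^{-\lambda_0 t}$, giving $\|h_t-\SS_\LL(t-T_0)h_{T_0}\|\lesssim e^{-\lambda_0 t}$, and the linear part decays at exactly $e^{-\lambda_0 t}$ by \eqref{eq:lambda0}.

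The main obstacle I anticipate is step (i) — establishing the decay of $\SS_\LL(t)$ with the sharp rate $\lambda_0$ in a space large enough to contain the orbit provided by the Cauchy theory. This requires the full machinery of \cite{GMM}: a splitting $\LL=\AA+\BB$ with $\AA$ bounded and $\BB$ hypodissipative in the large space, the regularisation property of $\AA\SS_\BB(t)$, and the factorisation $\SS_\LL=\SS_\BB + \SS_\BB\AA\SS_\LL$ plebiscited back into $L^2(\mu^{-1/2})$ where the spectral gap \eqref{eq:lambda0bis} lives; controlling the resolvent and iterated convolutions to transfer the \emph{sharp} constant $\lambda_0$ (not merely some positive rate) across spaces is the delicate part. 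The secondary difficulty is the bilinear estimate for $Q(h,h)$ in the weighted $L^1$ setting with a controlled loss, matched to the moment and smoothing bounds available from \cite{DesVi1,DesVi2}; this is technical but follows the now-standard integration-by-parts analysis of the Landau kernel.
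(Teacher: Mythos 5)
Your architecture is exactly the paper's: extend the linearised semigroup decay to $L^1(\la v\ra^k)$ via the splitting $\LL=\AA+\BB$ and the enlargement theorem (Theorem~\ref{thm:trou}, with sharp rate $\lambda_0$ because $a_{m,p}=-\infty$ for $\gamma>0$); use the Desvillettes--Villani polynomial convergence, interpolated against the uniform moment bounds, to enter a small weighted-$L^1$ neighbourhood of $\mu$ at some finite time $t_0$ (Corollary~\ref{cor:poly-rate}); then run a Duhamel bootstrap on $[t_0,\infty)$ (Lemma~\ref{lem:exp-rate}). Your two-pass device for upgrading a rate $\lambda<\lambda_0$ to the sharp rate is a legitimate variant of the paper's one-pass argument via a superlinear Gronwall inequality.

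The one step that does not work as written is the quadratic term $C'(2\eta)^2$ in your bootstrap inequality. The Landau operator is second order in its second argument, so the bilinear estimate (Proposition~\ref{prop:EstimationPotentielDur}) controls $\| Q(h,h)\|_{L^1(\la v\ra^k)}$ by, among other terms, $\|h\|_{L^1(\la v\ra^{\gamma+2})}\,\|\nabla^2 h\|_{L^1(\la v\ra^{\gamma+2+k})}$. If, as you propose, the derivative factor is simply ``bounded uniformly in time by the Cauchy theory,'' the source term becomes only \emph{linear} in the bootstrapped norm $\|h_s\|_{L^1(\la v\ra^k)}$, with a constant governed by the (not small) smoothness bound; the Gronwall step then yields $e^{(C'-\lambda)t}$ and the continuation argument does not close. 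The missing ingredient is the interpolation $\|\nabla^2 h\|_{L^1(\la v\ra^{\gamma+2+k})}\lesssim \|h\|_{L^1(\la v\ra^{k})}^{1/2}\,\|h\|_{H^{4}(\la v\ra^{\ell})}^{1/2}$, which converts the derivative loss into a fractional power of the small norm; this is what the paper does, arriving at $\|Q(h_s,h_s)\|_{L^1(\la v\ra^k)}\lesssim \eps^{1/4}\,x(s)^{5/4}$ rather than $x(s)^2$. Any exponent strictly greater than $1$ (with a small prefactor) closes the bootstrap and already delivers the sharp rate $\lambda_0$ by Mouhot's nonlinear Gronwall lemma, so your scheme survives, but the superlinearity has to be earned through this interpolation against the $H^4$ smoothing bound rather than asserted as a clean quadratic estimate.
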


As mentioned above,
in the case of hard potentials $\gamma\in (0,1]$, a polynomial decay to equilibrium was proven by Desvillettes and Villani \cite{DesVi2} and in the case of Maxwellian molecules $\gamma=0$ an exponential decay to equilibrium was proven by Villani~\cite{Vi1} and also by Desvillettes and Villani~\cite{DesVi2}. The proof of Theorem~\ref{thm:rate} relies on coupling the polynomial in time decay from \cite{DesVi2} for small times and the exponential decay for the linearised operator in weighted $L^p$-spaces from Theorem~\ref{thm:trou} for large times, when the linearised dynamics is dominant. This method was first used by Mouhot \cite{Mouhot2} where is proved the exponential decay to equilibrium for the spatially homogeneous Boltzmann equation for hard potentials with cut-off. Later, the same approach was used by Gualdani, Mischler and Mouhot \cite{GMM} to prove the exponential decay to the equilibrium for the inhomogeneous Boltzmann equation for hard spheres on the torus, and also by Mischler and Mouhot \cite{MiMo-FP} for Fokker-Planck equations.

\subsection{Organisation of the paper}
We start Section~\ref{sec:linear} presenting some properties of the linearised equation and then we state and prove the "spectral gap/semigroup decay" extension theorem (Theorem~\ref{thm:trou}), which is a key ingredient of the proof of the main theorem. 
Finally, in Section~\ref{sec:nonlinear}, we prove estimates for the (nonlinear) Landau operator and then prove Theorem~\ref{thm:rate}.

\bigskip
\noindent{\bf Acknowledgements.}
We would like to thank St\'ephane Mischler and Cl\'ement Mouhot for enlightened discussions and their encouragement.

\section{The linearised equation}\label{sec:linear}

We define (see e.g. \cite{DesVi1,Vi2,Vi1}) in $3$-dimension the following quantities
\beqn\label{eq:bc}
b_i(z) = \partial_j a_{ij}(z) = - 2 \, |z|^\gamma \, z_i, \quad c(z) =  \partial_{ij} a_{ij}(z)  = - 2 (\gamma+3) \, |z|^\gamma.
\eeqn
Hence, we can rewrite the Landau operator \eqref{eq:oplandau0} in the following way
\beqn\label{eq:oplandau}
Q(g,f) = ( a_{ij}*g) \partial_{ij} f - (c*g) f = \partial_i [(a_{ij}*g)\partial_j f - (b_i*g)f].
\eeqn
We also denote
\beqn\label{eq:barabc}
\bar a_{ij}(v) = a_{ij}*\mu, \quad
\bar b_i(v) = b_i * \mu, \quad
\bar c(v) = c*\mu.
\eeqn

Using the form \eqref{eq:oplandau} of the operator $Q$, we decompose the linearised Landau operator $\LL$ defined 
in \eqref{eq:oplandaulin} as $\LL =  \AA_0 + \BB_0$, where we define
\beqn\label{eq:A0B0}
\bal
\AA_0 f &:= Q(f,\mu) = (a_{ij}* f)\partial_{ij}\mu - (c * f)\mu, \\
\BB_0 f &:= Q(\mu,f) =(a_{ij}* \mu)\partial_{ij}f - (c * \mu)f.
\eal
\eeqn
Consider a smooth nonnegative function $\chi \in C^\infty_c(\R^3)$ such that $0\leq \chi(v) \leq 1$, $\chi(v) \equiv 1$ for $|v|\leq 1$ and $\chi(v) \equiv 0$ for $|v|>2$. For any $R\geq 1$ we define $\chi_R(v) := \chi(R^{-1}v)$ and in the sequel we shall consider the function $M\chi_R$, for some constant $M>0$.
Then, we make the final decomposition of the operator $\LL$ as $\LL = \AA + \BB$ with
\beqn\label{eq:AB}
\AA  := \AA_0 + M \chi_R ,
\qquad
\BB  := \BB_0 - M\chi_R,
\eeqn
where $M$ and $R$ will be chosen later (see Lemma~\ref{lem:hypo}).

\medskip
Let us now make our assumptions on the weight functions $m=m(v)$.
We define the polynomial weight, for all $p\in[1,+\infty)$,
\beqn\label{m1}
\bal
m = \la v \ra^k, \quad\text{with } k >  \gamma+2 +3(1-1/p)
\eal
\eeqn
and the abscissa
\beqn\label{amp1}
\bal
a_{m,p} &:= 2[3(1-1/p) - k],& \text{ if } \gamma=0, \\
a_{m,p} &:= - \infty ,& \text{ if } \gamma \in (0,1]. 
\eal
\eeqn
Moreover, we define the exponential weight, for $p\in[1,+\infty)$,
\beqn\label{m2}
\bal
m = \exp\left(r \la v\ra^s \right), \quad\text{with}\quad
\left\{
\begin{array}{ll}
r>0, &\text{if } s\in(0,2), \\
0< r < \frac{1}{2p}, & \text{if } s=2,
\end{array}
\right.
\eal
\eeqn
and we define the abscissa, for all cases,
\beqn\label{amp2}
\bal
a_{m,p} &:= - \infty.
\eal
\eeqn

We are able know to state the following result on the exponential decay of the semigroup associated to the Landau linearised operator $\LL$ in various weighted $L^p$-spaces. Observe that this result extends the functional space in which a semigroup decay estimate is already known to hold, as presented in \eqref{eq:lambda0bis}-\eqref{eq:lambda0} for the space $L^2(\mu^{-1/2})$. We include here the case of Maxwellian molecules $\gamma=0$ for the sake of completeness.

\begin{thm}\label{thm:trou}
Let $\gamma\in [0,1]$, $p\in[1,2]$, a weight function $m=m(v)$ satisfying \eqref{m1} or \eqref{m2} and their respective abscissa $a_{m,p}$ given by \eqref{amp1} or \eqref{amp2}. Consider the linearised Landau operator $\LL$ \eqref{eq:oplandaulin}, 
then for any positive $\lambda~\leq~\min\{\lambda_0, \lambda_1 \}$, for any $\lambda_1 < |a_{m,p}|$, there exists $C_\lambda>0$ such that
\beqn\label{eq:thmtrou}
\forall\, t\geq 0,\; \forall\, h\in L^p(m), \qquad
\| \SS_\LL (t) h - \Pi h \|_{L^p(m)} \leq C_\lambda \, e^{-\lambda t} \,  \| h - \Pi h \|_{L^p(m)},
\eeqn
where $\SS_\LL (t) h$ is the semigroup generated by $\LL$, $\Pi $ is the projection onto the null space of $\LL$, and 
$\lambda_0>0$ is the spectral gap of $\LL$ in $L^2(\mu^{-1/2})$ given by \eqref{eq:lambda0bis}-\eqref{eq:lambda0}.
\end{thm}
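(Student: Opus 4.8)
The plan is to apply the enlargement/factorization method of Gualdani--Mischler--Mouhot \cite{GMM} (see also \cite{Mouhot2,MiMo-FP}), using the decomposition $\LL = \AA + \BB$ from \eqref{eq:AB} as the bridge between the small space $E := L^2(\mu^{-1/2})$, on which the spectral gap \eqref{eq:lambda0bis}--\eqref{eq:lambda0} is already known, and the large space $\EE := L^p(m)$. The abstract result one wants to invoke requires verifying a short list of hypotheses on the operators $\AA$ and $\BB$ in the relevant spaces, after which the conclusion \eqref{eq:thmtrou} follows essentially formally. So the proof splits into: (a) identifying the spectral structure of $\LL$ on $\EE$, and (b) checking the hypotheses of the enlargement theorem.

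\smallskip
\textbf{Step 1: dissipativity of $\BB$ on $L^p(m)$.} First I would show that, for $M$ and $R$ in \eqref{eq:AB} chosen large enough, $\BB = \BB_0 - M\chi_R$ is hypodissipative on $L^p(m)$ with rate $a_{m,p}$ (or any $\lambda_1 < |a_{m,p}|$). Since $\BB_0 f = \bar a_{ij}\partial_{ij} f - \bar c\, f$ is a drift-diffusion type operator with coefficients behaving like $\la v\ra^{\gamma+2}$ (for $\bar a$) and $\la v\ra^\gamma$ (for $\bar c$) at infinity, one computes $\int (\BB f)\, |f|^{p-1}\mathrm{sign}(f)\, m^p$ by integrating by parts; the diffusion term produces a good negative contribution, and one controls the lower-order terms by a weighted interpolation. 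The outcome is a bound of the form $\int (\BB f)\,|f|^{p-2}f\, m^p \le \int \psi_{m,p}(v)\,|f|^p m^p$ where $\psi_{m,p}(v)\to a_{m,p}$ as $|v|\to\infty$; choosing $M,R$ large makes $\psi_{m,p} \le a_{m,p}+\varepsilon$ everywhere. For $\gamma\in(0,1]$ the coefficient $\bar c \sim \la v\ra^\gamma$ forces $\psi_{m,p}\to-\infty$, giving $a_{m,p}=-\infty$ as in \eqref{amp1}, \eqref{amp2}; for $\gamma=0$ one gets the finite value in \eqref{amp1}. This is the computational heart of the argument.

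\smallskip
\textbf{Step 2: regularization by $\AA$ and $\BB\AA$.} Next I would check that $\AA = \AA_0 + M\chi_R$ is bounded from $L^p(m)$ into the small space $E$, and more precisely that the convolution $\SS_\BB * (\AA\SS_\BB)$-type iterates gain enough integrability and decay. Since $\AA_0 f = (a_{ij}*f)\partial_{ij}\mu - (c*f)\mu$ and $M\chi_R f$ both map into a space of functions decaying like a Gaussian (they are multiplied by $\mu$ or compactly supported), $\AA$ is in fact bounded $L^p(m)\to L^q(\mu^{-1/2})$ for suitable $q$, and iterating $\BB$ and $\AA$ in the usual way (using the hypodissipativity from Step~1 plus parabolic-type smoothing of $\BB_0$) shows that $(\AA\SS_\BB(t))$ and the iterated convolutions satisfy time-integrable bounds with exponential weight $e^{a t}$ for any $a > a_{m,p}$, mapping $\EE$ into $E$.

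\smallskip
\textbf{Step 3: apply the enlargement theorem and identify the spectrum.} With Steps~1--2, the abstract theorem of \cite{GMM} (enlargement of the functional space of semigroup decay) applies: it gives that $\LL$ generates a semigroup on $\EE = L^p(m)$, that its spectrum in $\{\mathrm{Re}\,z > a_{m,p}\}$ (equivalently, $\{\mathrm{Re}\,z \ge -\lambda_1\}$) coincides with that of $\LL$ on $E$, hence consists only of $\{0\}$ (with the eigenprojector $\Pi$ onto $\NN(\LL)$ given by \eqref{eq:kerLL}, which lies in $\EE\cap E$), and that one has the decay estimate: for any $\lambda < \min\{\lambda_0,\lambda_1\}$ there is $C_\lambda$ with $\|\SS_\LL(t)(\mathrm{Id}-\Pi)\|_{\EE\to\EE} \le C_\lambda e^{-\lambda t}$. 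Since $\SS_\LL(t)\Pi = \Pi$ and $\Pi\SS_\LL(t)=\Pi$, this is exactly \eqref{eq:thmtrou}. One small point to record is that $\Pi$, defined via the $L^2(\mu^{-1/2})$ inner product, extends to a bounded projection on $L^p(m)$ because $\NN(\LL)$ is spanned by Gaussian-weighted polynomials and $m$ grows at most like a subgaussian (cf. \eqref{m1}, \eqref{m2}); this uses the constraint $r < 1/(2p)$ when $s=2$.

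\smallskip
\textbf{Main obstacle.} The delicate part is Step~1, the hypodissipativity estimate for $\BB$ in $L^p(m)$, $p\in[1,2]$, with the sharp abscissa: one must carefully track the weighted integration by parts for the degenerate anisotropic diffusion $\bar a_{ij}\partial_{ij}$ (which is only elliptic in directions transverse to $v$, with the $v$-direction coefficient decaying like $\la v\ra^\gamma$), make sure the cross terms coming from differentiating the weight $m^p$ are absorbed, and verify that the resulting symbol $\psi_{m,p}$ has exactly the claimed limit $a_{m,p}$ — in particular that for $\gamma>0$ the zeroth-order coefficient $\bar c\sim\la v\ra^\gamma$ dominates and pushes the abscissa to $-\infty$. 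A secondary technical nuisance is establishing the $L^p\to E$ smoothing needed in Step~2 for the low-regularity endpoint $p=1$, where one relies on the ultracontractivity/regularizing properties of the Kolmogorov-type operator $\BB_0$.
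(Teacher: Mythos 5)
Your proposal follows the paper's proof essentially verbatim: the same decomposition $\LL=\AA+\BB$ from \eqref{eq:AB}, the same verification of hypodissipativity of $\BB$ on $L^p(m)$ by a weighted $L^p$ energy estimate with $M,R$ large, boundedness of $\AA$ into the small space together with a Nash-type smoothing estimate for the iterated convolution $(\AA\SS_\BB)^{*2}$ when $p<2$, and the Gualdani--Mischler--Mouhot enlargement theorem to conclude. One small correction to your heuristic in Step~1: the term of order $\la v\ra^{\gamma}$ that drives the symbol $\varphi_{m,p,\theta}$ to $-\infty$ for $\gamma>0$ is not the zeroth-order coefficient $\bar c$ (whose contribution $(1/p-1)\,\bar c = 2(\gamma+3)(1-1/p)J_\gamma$ is nonnegative) but the drift term $\bar b_i\,\partial_i m/m = -k\,\ell_1(v)\,|v|^2\la v\ra^{-2}\sim -2k\la v\ra^{\gamma}$, which is precisely why the lower bound on $k$ in \eqref{m1} is needed.
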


\begin{rem}\label{rem:trou}
As we can see in the definition of $a_{m,p}$ in \eqref{amp1} and \eqref{amp2}, we conclude that:
\begin{enumerate}

\medskip

\item \textit{Hard potentials case $\gamma\in (0,1]$}: for both weight functions $m$, stretched exponential weight \eqref{m2} or polynomial weight \eqref{m1}, we have an exponential in time decay with optimal rate $\lambda = \lambda_0$, since $a_{m,p}:=-\infty$. 

\medskip

\item \textit{Maxwellian molecules case $\gamma = 0$}: if $m$ is a stretched exponential weight \eqref{m2}, we get the optimal rate $\lambda = \lambda_0$, since $a_{m,p}:=-\infty$; if $m$ is a polynomial weight \eqref{m1}, then we get the optimal rate $\lambda = \lambda_0$ if $k$ is big enough such that $a_{m,p} = 2[3(1-1/p) - k] < -\lambda_0$, otherwise we have $\lambda < 2[k - 3(1-1/p)] $.

\end{enumerate}
 
\end{rem}

This theorem extends the exponential semigroup decay to weighted $L^p$ spaces using a method developed by Gualdani, Mischler and Mouhot \cite{GMM} (see Theorem~\ref{thm:extension} below) for Boltzmann and Fokker-Planck equations (see also Mischler and Mouhot \cite{MiMo-FP} for other results on Fokker-Planck equations).

\subsection{Abstract theorem}
We shall present in this subsection an abstract theorem from \cite{GMM,MiMo-FP}, which will be used to prove Theorem \ref{thm:trou}.

Let us introduce some notation before state the theorem. 
Consider two Banach spaces $(X,\| \cdot \|_X )$ and $(Y, \|\cdot\|_Y)$. We denote by $\BBB(X,Y)$ the space of bounded linear operators from $X$ to $Y$ and by $\| \cdot \|_{\BBB(X,Y)}$ its operator norm. Moreover we write $\CCC(X,Y)$ the space of closed unbounded linear operators from $X$ to $Y$ with dense domain. When $X=Y$ we simply denote $\BBB(X) = \BBB(X,X)$ and $\CCC(X) = \CCC(X,X)$. 

Given a Banach space $X$ and a operator $\Lambda: X \to X$, we denote $\SS_\Lambda(t)$ or $e^{t\Lambda}$ the semigroup generated by $\Lambda$. We also denote $\NN(\Lambda)$ its null space, $\mathrm{dom}(\Lambda)$ its domain, $\Sigma(\Lambda)$ its spectrum and $\mathrm R(\Lambda)$ its range. Recall that for any $z$ in the resolvent set $\rho(\Lambda) := \C \setminus \Sigma(\Lambda)$, the operator $\Lambda - z$ is invertible, moreover the resolvent operator $(\Lambda-z)^{-1} \in \BBB(X)$ and its range equals $\mathrm{dom}(\Lambda)$. An eigenvalue $\xi \in \Sigma(\Lambda)$ is isolated if
$$
\Sigma(\Lambda) \cap \{ z \in \C; \; |z-\xi| \leq r \} = \{ \xi \} 
\quad\text{for some } r>0.
$$
Then for an isolated eigenvalue $\xi$ we define the associated spectral projector $\Pi_{\Lambda,\xi} \in \BBB(X)$ by
\beqn\label{eq:spectral-projector}
\Pi_{\Lambda,\xi} := -\frac{1}{2 i \pi} \int_{|z-\xi|=r'} (\Lambda-z)^{-1} \, dz
\quad\text{with } 0 < r' < r.
\eeqn
If moreover the algebraic eigenspace $\mathrm R (\Pi_{\LL,\xi})$ is finite dimensional, we say that $\xi$ is a discrete eigenvalue and write $\xi \in \Sigma_d(\Lambda)$. Finally, for any $a\in \R$ we define the subspace 
$$
\Delta_a := \{ z \in \mathbb C ; \Re  z >a \}.
$$

\begin{definition}
Let $X_1$, $X_2$ and $X_3$ be Banach spaces and $\SS_1 \in L^1(\R_+, \BBB(X_1,X_2))$, 
$\SS_2 \in L^1(\R_+, \BBB(X_2,X_3))$. We define the convolution $\SS_2 * \SS_1 \in L^1(\R_+, \BBB(X_1,X_3))$ by 
$$
\forall\, t\geq 0, \qquad
\SS_2 * \SS_1 (t) := \int_{0}^t \SS_2(s) \SS_1(t-s)\, ds.
$$
If $X_1=X_2=X_3$ and $\SS=\SS_1=\SS_2$, we define $\SS^1 = \SS$ and $\SS^{*n} = \SS*\SS^{*(n-1)}$ for all $n\geq 2$.

\end{definition}

We can now state a simplified version of \cite[Theorem 2.13]{GMM} that is suitable for our particular case.

\begin{thm}
\label{thm:extension}
Let $E$ and $\EE$ be Banach spaces such that $E \subset \EE$ is dense with continuous embedding. Consider the operators $L \in \CCC(E)$, $\LL \in \CCC(\EE)$ with $L = \LL_{|E}$ and assume that:

\begin{enumerate}[(1)]

\item L generates a semigroup $\SS_L(t)$ on $E$, $L$ is hypo-dissipative on $\mathrm R(I-\Pi_{\LL,0})$ and moreover 

\begin{enumerate}[(i)]


\item There exists $\lambda_0 >0$ such that 
$$
\Sigma(L) \cap \Delta_{b}   =\{ 0 \}, \quad \text{for any}\quad -\lambda_0 < b < 0.
$$

\item There is $b' < - \lambda_0$ such that
$$
\Sigma(L) \cap \Delta_{b'}   =\{ 0, - \lambda_0 \}.
$$

\end{enumerate}

\item There are $\AA,\BB \in \CCC(\EE)$ such that $\LL = \AA + \BB$, with the corresponding restrictions $A = \AA_{|E}$ and $B=\BB_{|E}$ on $E$, some $n \in \N^*$, some $a \in \R$ and some constant $C_a >0$ such that

\begin{enumerate}[(i)]
\item $\BB-a$ is hypo-dissipative on $\EE$;

\item $A\in \BBB(E)$ and $\AA \in \BBB(\EE)$;

\item we have
$$
\left\| (\AA\SS_\BB)^{*n}(t) \right\|_{\BBB(\EE,E)} \leq C_a e^{at}.
$$
\end{enumerate}

\end{enumerate}

Then $\LL$ is hypo-dissipative on $\EE$ and we have the following estimates: If 
$a< - \lambda_0$, there holds
\beqn\label{eq:thm-LL-dissipative}
\forall\, t\geq0, \qquad
\left\| \SS_\LL(t) - \Pi_{\LL,0} \right\|_{\BBB(\EE)} 
\leq C' \,  e^{- \lambda_0 t}.
\eeqn
Otherwise, if $a \ge \lambda_0$, then for any $a' > a$ there holds
\beqn\label{eq:thm-LL-dissipative-bis}
\forall\, t\geq0, \qquad
\left\| \SS_\LL(t) - \Pi_{\LL,0} \right\|_{\BBB(\EE)} 
\leq C' \,  e^{a' t},
\eeqn
where $C' >0$ is an explicit constant depending on the constants from the assumptions. 

\end{thm}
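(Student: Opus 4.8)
The plan is to run the enlargement-of-functional-space argument in three stages: generation of $\SS_\LL(t)$ on $\EE$ together with a factorised Duhamel formula; an \emph{enlargement of the spectral information} on a half-plane $\Delta_b$ with $b<0$ (the technical core); and the quantitative decay estimate. Throughout write $R_\Lambda(z):=(\Lambda-z)^{-1}$. For Stage~1: since $\BB-a$ is hypo-dissipative on $\EE$ it generates a semigroup with $\|\SS_\BB(t)\|_{\BBB(\EE)}\lesssim e^{at}$, so $\{\Re z>a\}\subset\rho(\BB)$ with $z\mapsto R_\BB(z)$ holomorphic there; since $\AA\in\BBB(\EE)$, bounded perturbation theory shows $\LL=\AA+\BB$ generates a $C_0$-semigroup $\SS_\LL(t)$ on $\EE$ with $\mathrm{dom}(\LL)=\mathrm{dom}(\BB)$, and uniqueness gives $\SS_\LL(t)_{|E}=\SS_L(t)$. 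Iterating Duhamel's formula $\SS_\LL=\SS_\BB+\SS_\LL*(\AA\SS_\BB)$ yields, for every $n\ge1$,
\begin{equation}\label{eq:ext-duh}
\SS_\LL(t)=\SS_\BB(t)+\sum_{\ell=1}^{n-1}\big(\SS_\BB*(\AA\SS_\BB)^{*\ell}\big)(t)+\big(\SS_\LL*(\AA\SS_\BB)^{*n}\big)(t),
\end{equation}
and, taking Laplace transforms, the companion resolvent identity valid for $\Re z$ large,
\begin{equation}\label{eq:ext-res}
R_\LL(z)=\sum_{\ell=0}^{n-1}(-1)^\ell R_\BB(z)\big(\AA R_\BB(z)\big)^\ell+(-1)^n R_\LL(z)\big(\AA R_\BB(z)\big)^n .
\end{equation}
By hypothesis~2(iii), $(\AA\SS_\BB)^{*n}$ has Laplace transform $(-1)^n(\AA R_\BB(z))^n$, which therefore lies in $\BBB(\EE,E)$ and is holomorphic on $\Delta_a$.

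Stage~2 is the technical heart. Fix $b\in(a,0)$ (in the applications $a<0$; when $a\ge0$ one replaces $0$ by any $a''>a$ below). On $\Delta_b\setminus\{0\}$ every term of \eqref{eq:ext-res} makes sense: the first sum because $\Delta_b\subset\Delta_a\subset\rho(\BB)$; and in the last term, because $(\AA R_\BB(z))^n$ maps $\EE$ into $E$, because $\Delta_b\setminus\{0\}\subset\rho(L)$ by hypothesis~1(i), and because $R_\LL(z)_{|E}=R_L(z)$ wherever both exist, one may there replace $R_\LL(z)$ by $R_L(z)$, so that the operator
$$
\Theta(z):=\sum_{\ell=0}^{n-1}(-1)^\ell R_\BB(z)\big(\AA R_\BB(z)\big)^\ell+(-1)^n R_L(z)\big(\AA R_\BB(z)\big)^n
$$
is holomorphic on $\Delta_b\setminus\{0\}$, carries at worst the pole of $R_L$ at $0$, and maps $\EE$ into $\mathrm{dom}(\LL)$. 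A direct computation with the resolvent identities gives $(\LL-z)\Theta(z)=I=\Theta(z)(\LL-z)$ for $\Re z$ to the right of the growth bound of $\SS_\LL$, hence by analytic continuation on all of $\Delta_b\setminus\{0\}$; therefore $\Sigma(\LL)\cap\Delta_b\subseteq\{0\}$ and $R_\LL=\Theta$ there. Since $0$ is a semisimple discrete eigenvalue of $L$ with finite-dimensional eigenspace $\mathrm R(\Pi_{L,0})\subset E$, the residue of $\Theta$ at $0$ is of order one, so $0$ is a discrete eigenvalue of $\LL$ and the Dunford integral produces a spectral projection $\Pi_{\LL,0}$ with $(\Pi_{\LL,0})_{|E}=\Pi_{L,0}$, $\mathrm R(\Pi_{\LL,0})=\mathrm R(\Pi_{L,0})\subset E$, and $0$ semisimple for $\LL$; in particular $\SS_\LL(t)\Pi_{\LL,0}=\Pi_{\LL,0}=\Pi_{\LL,0}\SS_\LL(t)$. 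When $a<-\lambda_0$, running the same argument in $\Delta_{b'}$ with $b'<-\lambda_0$ and invoking hypothesis~1(ii) shows $-\lambda_0$ is the only further spectral value there, which is what will pin the optimal rate.

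Stage~3: set $\Pi:=\Pi_{\LL,0}$ and estimate $\SS_\LL(t)-\Pi=\SS_\LL(t)(I-\Pi)$ from \eqref{eq:ext-duh}. The terms $\SS_\BB*(\AA\SS_\BB)^{*\ell}$, $0\le\ell\le n-1$, have $\BBB(\EE)$-norm $\lesssim(1+t)^{n-1}e^{at}$ since $\|\SS_\BB(t)\|_{\BBB(\EE)}\lesssim e^{at}$ and $\AA$ is bounded. For the last term, $\big(\SS_\LL*(\AA\SS_\BB)^{*n}\big)(t)(I-\Pi)=\int_0^t\SS_L(t-s)h_s\,ds$ with $h_s:=(\AA\SS_\BB)^{*n}(s)(I-\Pi)g\in E$ and $\|h_s\|_E\lesssim e^{as}\|g\|_\EE$ by~2(iii); writing $h_s=\Pi_{L,0}h_s+(I-\Pi_{L,0})h_s$ and using $\|\SS_L(u)(I-\Pi_{L,0})\|_{\BBB(E)}\lesssim e^{-\lambda_0 u}$ from hypothesis~1 bounds the second contribution by $\int_0^t e^{-\lambda_0(t-s)}e^{as}\,ds\cdot\|g\|_\EE$, while the first equals $\int_0^t\Pi_{L,0}h_s\,ds=\int_0^t\Pi_{\LL,0}(\AA\SS_\BB)^{*n}(s)(I-\Pi)g\,ds=\Pi_{\LL,0}\big(\SS_\LL*(\AA\SS_\BB)^{*n}\big)(t)(I-\Pi)g$ (using $(\Pi_{\LL,0})_{|E}=\Pi_{L,0}$ and $\Pi_{\LL,0}\SS_\LL=\Pi_{\LL,0}$), which by applying $\Pi_{\LL,0}$ to \eqref{eq:ext-duh} at $(I-\Pi)g$ and using $\Pi_{\LL,0}(I-\Pi)=0$ is $-\Pi_{\LL,0}\big(\SS_\BB(t)(I-\Pi)g+\sum_{\ell=1}^{n-1}(\SS_\BB*(\AA\SS_\BB)^{*\ell})(t)(I-\Pi)g\big)$, hence again $\lesssim(1+t)^{n-1}e^{at}\|g\|_\EE$. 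If $a<-\lambda_0$ then $a+\lambda_0<0$, so $\int_0^t e^{-\lambda_0(t-s)}e^{as}\,ds\le Ce^{-\lambda_0 t}$ and all contributions are $\lesssim e^{-\lambda_0 t}$, giving \eqref{eq:thm-LL-dissipative}; otherwise, for any $a'>a$ both the polynomial-times-$e^{at}$ terms and $\int_0^t e^{-\lambda_0(t-s)}e^{as}\,ds$ are $\lesssim e^{a't}$, giving \eqref{eq:thm-LL-dissipative-bis}. Finally $\SS_\LL(t)\Pi=\Pi$ is bounded, so these estimates also yield the hypo-dissipativity of $\LL$ on $\EE$, and all constants are explicit in $n,a,C_a,\lambda_0,\|\AA\|_{\BBB(\EE)},\|\Pi_{L,0}\|$ and the constants of hypothesis~1.

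The delicate point is Stage~2: a priori $\SS_\LL$ may grow on $\EE$ at rate $a+\|\AA\|_{\BBB(\EE)}$, so $R_\LL$ is only known to exist far to the right of the imaginary axis, and one must propagate it into $\Delta_b$ with $b<0$ and identify the nature of the singularity at $0$. This is exactly what the factorised resolvent \eqref{eq:ext-res}, combined with the regularising bound~2(iii) (which forces the ``bad'' factor $(\AA R_\BB(z))^n$ to land in $E$, where the full spectral picture of $L$ is given by hypothesis~1), achieves; once that is done, both the stability and the sharp exponential rate follow from the essentially algebraic manipulations of Stages~1 and~3.
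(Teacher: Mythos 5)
The paper does not actually prove this theorem---it is stated as a simplified version of \cite[Theorem 2.13]{GMM} and used as a black box---and your sketch reproduces precisely the factorization argument of that reference (iterated Duhamel formula, factorized resolvent identity to enlarge the spectral information into $\Delta_b$ and construct $\Pi_{\LL,0}$ with range in $E$, then the convolution estimates splitting along $\Pi_{L,0}$), so it is essentially the same approach as the source and the key steps, including the algebraic removal of the $\Pi_{L,0}$-contribution in the last Duhamel term, are correct. The only interpretive choices you make (reading ``$L$ hypo-dissipative on $\mathrm R(I-\Pi_{\LL,0})$'' as the quantitative bound $\|\SS_L(t)(I-\Pi_{L,0})\|_{\BBB(E)}\lesssim e^{-\lambda_0 t}$, and taking the hypo-dissipativity of $\BB-a$ together with maximality so that $\SS_\BB$ exists with $\|\SS_\BB(t)\|_{\BBB(\EE)}\lesssim e^{at}$) are the intended ones in \cite{GMM}.
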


\Black

This theorem permits us to \emph{enlarge the space of spectral/semigroup estimates} of a given operator. More precisely,
the knowledge of the spectral information in some ``small space"~(1) allows us to extend this information to a ``bigger space" (\eqref{eq:thm-LL-dissipative} or \eqref{eq:thm-LL-dissipative-bis}), when the operator satisfies some conditions (2).

In our case, the spectral gap estimate of $\LL$ on $L^2(\mu^{-1/2})$ stated in \eqref{eq:lambda0bis}-\eqref{eq:lambda0} gives assumption~(1) of Theorem~\ref{thm:extension}. Thus, in order to prove Theorem~\ref{thm:trou}, we consider the operators $\AA$ and $\BB$ defined in \eqref{eq:AB}, and we shall prove assumptions (2i), (2ii) and (2iii) on the space $\EE = L^p(m)$. We can then conclude to the semigroup decay estimates \eqref{eq:thm-LL-dissipative} or \eqref{eq:thm-LL-dissipative-bis} applying Theorem~\ref{thm:extension}, which is nothing but the estimate in Theorem~\ref{thm:trou}.

\subsection{Hypo-dissipativity properties}
\label{ssec:hypo}
In this subsection we shall investigate the hypo-dissipativity of the operator $\BB$, defined in \eqref{eq:AB}, on $L^p(m)$ spaces, in order to prove assumption {\it (2i)} of Theorem~\ref{thm:extension}.
Before proving the desired result in Lemma~\ref{lem:hypo}, we give the following lemmas that will be useful in the sequel.

\begin{lem}\label{lem:Jalpha}
Let $J_\alpha(v) := \int_{\R^3} |v-w|^\alpha \mu(w)\, dw$, for $0 \leq \alpha \leq 3$, and denote $M_\alpha (\mu) := \int |v|^\alpha \mu$. Then it holds:
\begin{enumerate}[(a)]

\item $J_0(v) = 1$.

\item $J_\alpha(v) \leq |v|^\alpha + M_\alpha(\mu)$, for $ 0 <\alpha \leq 1$. 

\item $J_\alpha(v)  \leq |v|^\alpha + M_2(\mu)^{\alpha/2}$, for $1< \alpha < 2$.

\item $J_2(v) = |v|^2 + M_2(\mu)$.

\item $J_\alpha(v)  \leq |v|^\alpha +10^{\alpha/4}|v|^{\alpha/2}+ M_4(\mu)^{\alpha/4}$, 
for $2< \alpha \leq 3$.

\end{enumerate}

\begin{rem}
As we will see in the proof of Lemma~\ref{lem:hypo}, 
the important point here is that, for all $0\leq \alpha \leq 3$, the dominant part of the upper bound of $J_\alpha$ has coefficient equals to $1$.
\end{rem}

\end{lem}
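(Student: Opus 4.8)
The plan is to reduce all five estimates to two elementary ingredients: the sub-additivity inequality $(a_1+\cdots+a_n)^\theta\le a_1^\theta+\cdots+a_n^\theta$ for $a_i\ge 0$ and $0<\theta\le 1$ (which follows from $x^\theta\ge x$ on $[0,1]$ applied to each ratio $a_i/\sum_j a_j$), and Jensen's inequality for the concave map $t\mapsto t^\theta$, $0<\theta\le 1$, against the probability measure $\mu(w)\,dw$. The only genuine computations will be two Gaussian moment integrals, and the point of organising the proof this way is that these integrals come out with leading coefficient exactly $1$ in the top power of $|v|$, which is what the remark after the lemma emphasises.

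Item (a) is just $\int\mu=1$. For (b), when $0<\alpha\le 1$ the triangle inequality together with sub-additivity of $t\mapsto t^\alpha$ gives $|v-w|^\alpha\le(|v|+|w|)^\alpha\le|v|^\alpha+|w|^\alpha$; integrating in $w$ against $\mu$ yields $J_\alpha(v)\le|v|^\alpha+M_\alpha(\mu)$. For (d), I would expand $|v-w|^2=|v|^2-2v\cdot w+|w|^2$ and integrate, using that $\mu$ is centred so that $\int w\,\mu(w)\,dw=0$ kills the cross term; this leaves exactly $J_2(v)=|v|^2+M_2(\mu)$.

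For (c), with $1<\alpha<2$, I write $|v-w|^\alpha=(|v-w|^2)^{\alpha/2}$ and apply Jensen with the concave function $t^{\alpha/2}$ (here $\alpha/2<1$), obtaining $J_\alpha(v)\le\bigl(\int|v-w|^2\mu(w)\,dw\bigr)^{\alpha/2}=(|v|^2+M_2(\mu))^{\alpha/2}$ by (d); one further application of sub-additivity of $t^{\alpha/2}$ gives $J_\alpha(v)\le|v|^\alpha+M_2(\mu)^{\alpha/2}$. For (e), with $2<\alpha\le 3$, I do the same with the fourth power: since $\alpha/4\le 3/4<1$, Jensen gives $J_\alpha(v)\le\bigl(\int|v-w|^4\mu(w)\,dw\bigr)^{\alpha/4}$. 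The main step is the moment computation: expanding $|v-w|^4=(|v|^2-2v\cdot w+|w|^2)^2$ and using $\int w\,\mu=0$, $\int w_iw_j\,\mu=\delta_{ij}$ (so that $\int(v\cdot w)^2\mu=|v|^2$), $\int|w|^2\mu=M_2(\mu)=3$, and the vanishing odd moment $\int w_i|w|^2\mu=0$, one finds $\int|v-w|^4\mu(w)\,dw=|v|^4+10|v|^2+M_4(\mu)$. Sub-additivity of $t^{\alpha/4}$ then yields $J_\alpha(v)\le|v|^\alpha+10^{\alpha/4}|v|^{\alpha/2}+M_4(\mu)^{\alpha/4}$.

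There is essentially no obstacle here: the only part requiring real work is the fourth-moment expansion in (e), and even there it is mechanical. The one thing to watch throughout is that every step — Jensen to pass to an even power, the exact Gaussian moment identity, and the final sub-additivity splitting — preserves the coefficient $1$ on the highest power of $|v|$, precisely because $\int|v-w|^2\mu$ and $\int|v-w|^4\mu$ are exactly $|v|^2+\text{const}$ and $|v|^4+10|v|^2+\text{const}$. Estimates (c) and (e) could alternatively be obtained by splitting the integral over $\{|w|\le|v|\}$ and its complement, but that route would not obviously keep the leading constant equal to $1$, which is why passing through the even powers via Jensen is preferable.
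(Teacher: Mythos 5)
Your proposal is correct and follows essentially the same route as the paper: the triangle inequality plus sub-additivity of $t\mapsto t^\alpha$ for (b), Jensen's inequality against the even moments $\int|v-w|^2\mu$ and $\int|v-w|^4\mu$ for (c) and (e), and the explicit Gaussian identity $\int|v-w|^4\mu(w)\,dw=|v|^4+10|v|^2+M_4(\mu)$, which your expansion reproduces correctly. Nothing further is needed.
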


\begin{proof}[Proof of Lemma~\ref{lem:Jalpha}]
Items (a) and (d) are evident. For (b) we see that $|v-w|^\alpha \leq |v|^\alpha + |w|^\alpha$ and it implies $J_\alpha(v) \leq |v|^\alpha + M_\alpha(\mu)$. To prove item (c) we use $\alpha/2 < 1$ and Jensen's inequality to write
$$
J_\alpha(v)  \leq \left(\int_{\R^3} |v-w|^2 \mu(dw)\right)^{\alpha/2} = \left(|v|^2 + M_2(\mu)\right)^{\alpha/2} \leq |v|^\alpha + M_2(\mu)^{\alpha/2}.
$$
Finally, item (e) can be proven in the same way as (d). Firstly, for $\alpha=4$ explicit computation gives $J_4(v) = |v|^4 + 10|v|^2 + M_4(\mu)$. Then, from $\alpha/4 <1$ and Jensen's inequality we obtain
$$
\bal
J_\alpha(v) \leq \left(\int_{\R^3} |v-w|^4 \mu(dw)\right)^{\alpha/4} 
&= \left(|v|^4 + 10|v|^2 + M_4(\mu)\right)^{\alpha/4} \\
&\leq |v|^\alpha +10^{\alpha/4}|v|^{\alpha/2}+ M_4(\mu)^{\alpha/4}.
\eal
$$
\end{proof}

Furthermore we have the following results concerning $\bar a_{ij}(v)$. 
\begin{lem}\label{lem:bar-aij}
The following properties hold:

\begin{enumerate}[(a)]

\item The matrix $\bar a(v)$ has a simple eigenvalue $\ell_1(v)>0$ associated with the eigenvector $v$ and a double eigenvalue $\ell_2(v)>0$ associated with the eigenspace $v^{\perp}$. Moreover,
$$
\bal
\ell_1(v) &= \int_{\R^3} \left(1 - \left(\frac{v}{|v|}\cdot\frac{w}{|w|}   \right)^2   \right) |w|^{\gamma+2} \mu(v-w)\, dw\\
\ell_2(v) &= \int_{\R^3} \left(1 - \frac12 \left| \frac{v}{|v|}\times\frac{w}{|w|}  \right|^2   \right) |w|^{\gamma+2} \mu(v-w)\, dw .
\eal
$$
When $|v|\to +\infty$ we have
$$
\bal
\ell_1(v) &\sim  2 |v|^\gamma \\
\ell_2(v) &\sim  |v|^{\gamma+2} .
\eal
$$
If $\gamma\in (0,1]$ there exists $\ell_0 >0$ such that, for all $v\in\R^3$, 
$\min\{ \ell_1(v), \ell_2(v) \} \geq \ell_0$.

\item The function $\bar a_{ij}$ is smooth, for any multi-index $\beta\in \N^3$
$$
|\partial^\beta \bar a_{ij}(v)| 
\leq C_\beta \la v \ra^{\gamma+2-|\beta|}
$$
and
$$
\bal
\bar a_{ij}(v) \xi_i \xi_j &= \ell_1(v) |P_v \xi|^2 + \ell_2(v)|(I-P_v)\xi|^2, \\
\bar a_{ij}(v) v_i v_j  &= \ell_1(v) |v|^2 ,
\eal
$$
where $P_v$ is the projection on $v$, i.e.
$$
P_v \xi = \left( \xi \cdot \frac{v}{|v|} \right) \frac{v}{|v|}.
$$

\item We have
$$
\bar a_{ii}(v) 
= 2 \int_{\R^3} |v-v_*|^{\gamma+2} \mu(v_*)\, dv_*
\qquad\text{and}\qquad
\bar b_i(v) = - \ell_1(v)\, v_i.
$$

\end{enumerate}

\end{lem}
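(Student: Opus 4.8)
The whole lemma rests on the structural fact that, for each $z\neq0$, the matrix $a(z)=|z|^{\gamma+2}\big(I-\hat z\otimes\hat z\big)$ (with $\hat z:=z/|z|$) equals $|z|^{\gamma+2}$ times the orthogonal projection onto $z^\perp$; in particular $a_{ij}(z)z_j=0$. Since $\mu$ is radially symmetric and $a(Rz)=R\,a(z)\,R^{-1}$ for every rotation $R$, a change of variables gives $\bar a(Rv)=R\,\bar a(v)\,R^{-1}$, so $\bar a(v)$ is a symmetric matrix commuting with all rotations fixing the axis $\R v$; hence $\bar a(v)=\ell_1(v)P_v+\ell_2(v)(I-P_v)$ for scalars $\ell_1(v),\ell_2(v)$, which is the eigenstructure of part (a). Testing against $v$ gives $\ell_1(v)=|v|^{-2}\,v_i\bar a_{ij}(v)v_j=\int|v-v_*|^{\gamma+2}\big(1-(\hat v\cdot\widehat{v-v_*})^2\big)\mu(v_*)\,dv_*$, which becomes the stated integral after the substitution $w=v-v_*$; the formula for $\ell_2$ then follows from $\bar a_{ii}(v)=\ell_1(v)+2\ell_2(v)$ together with part (c), using the elementary identity $2-(1-(\hat v\cdot\hat w)^2)=1+(\hat v\cdot\hat w)^2=2\big(1-\tfrac12|\hat v\times\hat w|^2\big)$. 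Positivity is immediate: the respective integrands are $\geq0$ and $\geq\tfrac12$, while the weight $|w|^{\gamma+2}\mu(v-w)$ is positive a.e.; and at $v=0$ isotropy forces $\bar a(0)=c_0 I$ with $c_0=\tfrac13\bar a_{ii}(0)>0$.

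For the behaviour as $|v|\to\infty$, I would first rewrite $1-(\hat v\cdot\widehat{v-v_*})^2=|(I-P_v)v_*|^2/|v-v_*|^2$, so that $\ell_1(v)=\int|v-v_*|^{\gamma}\,|(I-P_v)v_*|^2\,\mu(v_*)\,dv_*$; dividing by $|v|^\gamma$ and invoking dominated convergence (with the $v$-independent majorant $(1+|v_*|)|v_*|^2\mu(v_*)$, valid for $|v|\geq1$) together with $\int|v_*|^2\mu=3$ and $\int(e\cdot v_*)^2\mu=1$ for unit $e$, one gets $\ell_1(v)/|v|^\gamma\to 3-1=2$. Likewise $\bar a_{ii}(v)/|v|^{\gamma+2}=2\int(|v-v_*|/|v|)^{\gamma+2}\mu(v_*)\,dv_*\to2$, so $\bar a_{ii}(v)\sim2|v|^{\gamma+2}$; since $\ell_1(v)=O(|v|^\gamma)=o(|v|^{\gamma+2})$, this forces $\ell_2(v)=\tfrac12(\bar a_{ii}(v)-\ell_1(v))\sim|v|^{\gamma+2}$. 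Finally, for $\gamma\in(0,1]$ the map $v\mapsto\min\{\ell_1(v),\ell_2(v)\}$ is continuous (by the smoothness from (b), with value $c_0$ at the origin), strictly positive at every point, and tends to $+\infty$ as $|v|\to\infty$; hence it attains a positive minimum $\ell_0>0$ on $\R^3$.

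To prove (b), note that $\bar a_{ij}=a_{ij}*\mu$ with $a_{ij}\in C^0(\R^3)$ of polynomial growth $O(|z|^{\gamma+2})$ and $\mu$ a Schwartz function, so $\bar a_{ij}\in C^\infty$ and $\partial^\beta\bar a_{ij}=(\partial^\beta\mu)*a_{ij}$. For the weighted bound I would fix a cut-off $\psi\in C^\infty_c(\R^3)$ with $\psi\equiv1$ near the origin and split $a_{ij}=\psi a_{ij}+(1-\psi)a_{ij}$. The piece $\psi a_{ij}$ is bounded with compact support, so its convolution with $\partial^\beta\mu$ is $\leq C_{\beta,N}\langle v\rangle^{-N}$ for every $N$ (for $|v|$ large the argument of $\partial^\beta\mu$ stays far from the support of the bump). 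The piece $(1-\psi)a_{ij}$ is globally smooth with $|\partial^\beta[(1-\psi)a_{ij}](z)|\leq C_\beta\langle z\rangle^{\gamma+2-|\beta|}$ by homogeneity of $a_{ij}$ at infinity; convolving $\partial^\beta[(1-\psi)a_{ij}]$ with $\mu$ and splitting the $v_*$-integral at $|v_*|=|v|/2$ — on $|v_*|\leq|v|/2$ one has $\langle v-v_*\rangle\simeq\langle v\rangle$, while $|v_*|>|v|/2$ is crushed by the Gaussian — yields $C_\beta\langle v\rangle^{\gamma+2-|\beta|}$. The two quadratic-form identities then follow at once from $\bar a(v)=\ell_1(v)P_v+\ell_2(v)(I-P_v)$ and $P_vv=v$. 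The one genuinely delicate point of the whole lemma lies exactly here: one must ensure that the merely finitely-differentiable singularity of $a_{ij}$ at the origin does not destroy the gain $\langle v\rangle^{-|\beta|}$ per derivative, and the fixed-cut-off splitting is designed precisely to confine that singularity to the bounded, compactly supported piece $\psi a_{ij}$, on which all derivatives can be harmlessly transferred onto the Gaussian.

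Part (c) is then elementary. Taking the trace, $a_{ii}(z)=|z|^{\gamma+2}(3-1)=2|z|^{\gamma+2}$, hence $\bar a_{ii}(v)=2\int|v-v_*|^{\gamma+2}\mu(v_*)\,dv_*$. For $\bar b_i$ I would use $b_i(z)=\partial_j a_{ij}(z)$, so that $b_i(v-v_*)=\partial_{v_j}a_{ij}(v-v_*)=-\partial_{v_{*j}}a_{ij}(v-v_*)$; integrating by parts in $v_*$ and using $\partial_{v_{*j}}\mu(v_*)=-v_{*j}\mu(v_*)$ gives $\bar b_i(v)=-\int a_{ij}(v-v_*)\,v_{*j}\,\mu(v_*)\,dv_*$. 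Invoking the algebraic identity $a_{ij}(z)z_j=0$ with $z=v-v_*$, that is $a_{ij}(v-v_*)v_{*j}=a_{ij}(v-v_*)v_j$, one pulls $v_j$ out of the integral to get $\bar b_i(v)=-v_j\bar a_{ij}(v)=-\ell_1(v)\,v_i$, the last step because $v$ is the $\ell_1(v)$-eigenvector of $\bar a(v)$ by part (a).
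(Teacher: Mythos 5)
Your proof is correct. For part (c) it follows essentially the paper's own argument: both reduce $\bar b_i$ to $-\int a_{ij}(v-v_*)v_{*j}\mu(v_*)\,dv_*$ and then exploit the algebraic identity $a_{ij}(z)z_j=0$; the paper does this via a change of variables $v_*\mapsto v-v_*$, while you keep the variables fixed and use $a_{ij}(v-v_*)v_{*j}=a_{ij}(v-v_*)v_j$ directly — the two are interchangeable. The substantive difference is in parts (a) and (b): the paper does not prove them at all, citing Degond--Lucquin-Desreux for (a) and Guo's Lemma~3 for (b), whereas you supply complete self-contained arguments. Your route to (a) — equivariance $\bar a(Rv)=R\,\bar a(v)R^{-1}$ forcing the spectral decomposition $\ell_1 P_v+\ell_2(I-P_v)$, then extracting $\ell_1$ by testing against $v$ and $\ell_2$ from the trace together with (c) — is clean and correct, and your derivation of the asymptotics (rewriting $1-(\hat v\cdot\widehat{v-v_*})^2=|(I-P_v)v_*|^2/|v-v_*|^2$ and applying dominated convergence, then getting $\ell_2\sim|v|^{\gamma+2}$ from $\bar a_{ii}\sim 2|v|^{\gamma+2}$ and $\ell_1=o(|v|^{\gamma+2})$) is more elementary than the explicit one-dimensional reductions in the cited reference. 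Likewise your cut-off splitting $a_{ij}=\psi a_{ij}+(1-\psi)a_{ij}$ for the derivative bounds in (b) is a standard and valid replacement for Guo's lemma, and you correctly identify the only delicate point (the finite differentiability of $a_{ij}$ at the origin must not spoil the gain of $\langle v\rangle^{-1}$ per derivative). One very minor remark: the continuity of $v\mapsto\min\{\ell_1(v),\ell_2(v)\}$ at $v=0$ is most cleanly seen by noting that this quantity equals $\min_{|\xi|=1}\bar a_{ij}(v)\xi_i\xi_j$, the smallest eigenvalue of the continuous matrix field $\bar a$; your appeal to isotropy at the origin gets there but leaves that small step implicit.
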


\begin{proof}[Proof of Lemma~\ref{lem:bar-aij}]
We just give the proof of item $(c)$ since $(a)$ comes from \cite[Propositions 2.3 and 2.4, Corollary 2.5]{DL} and $(b)$ is \cite[Lemma 3]{Guo}.

Hence, for item $(c)$ we write
$$
\bar a_{ii}(v) = \sum_{i=1}^3 \int_{\R^3} a_{ii}(v-v_*) \mu(v_*) \, dv_*.
$$
Using \eqref{eq:aij} we obtain that 
$$
a_{ii}(z) = \sum_{i=1}^3 |z|^{\gamma+2}\left( 1 - \frac{z_i^2}{|z|^2} \right)
= 2 |z|^{\gamma+2}
$$
and then
$$
\bar a_{ii}(v) = 2 \int_{\R^3} |v-v_*|^{\gamma+2} \mu(v_*)\, dv_*.
$$
Moreover, we compute
$$
\bar b_i(v) = (\partial_{j} a_{ij} * \mu)(v) = (a_{ij} * \partial_j \mu)(v)
= - \int_{\R^3} a_{ij}(v-v_*) v_{*j} \,  \mu(v_*)\, dv_*,
$$
and using that $a_{ij}(z) z_j = 0 $ we obtain
$$
\bal
\bar b_i(v) &= - \int_{\R^3} a_{ij}(v-v_*) v_{*j} \, \mu(v_*)\, dv_*\\
&=- \int_{\R^3} a_{ij}(v_*) (v_j-v_{*j}) \, \mu(v-v_{*})\, dv_* \\
&= - \left(\int_{\R^3} a_{ij}(v_*) \mu(v-v_*)\, dv_*\right) v_j
= - \bar a_{ij}(v) v_j = - \ell_1(v)\, v_i.
\eal
$$
\end{proof}

With the help of the results above, we are able to state the hypo-dissipativity result for $\BB$.
\begin{lem}\label{lem:hypo}
Let $\gamma\in[0,1]$, $p\in[1,+\infty)$ and consider a weight function $m=m(v)$ satisfying \eqref{m1} or \eqref{m2} with the corresponding the abscissa \eqref{amp1} or \eqref{amp2}, respectively. Then, for any $a> a_{m,p}$ we can choose $M$ and $R$ large enough such that the operator $\BB-a$ is dissipative in $L^p(m)$, in the sense that
$$
\forall\, t\ge 0, \qquad \| \SS_\BB(t) \|_{\BBB(L^p(m))} \le e^{ at }.
$$
\end{lem}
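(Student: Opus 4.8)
The plan is to prove a differential inequality of the form $\frac{d}{dt}\|f_t\|_{L^p(m)}^p \le p\,a\,\|f_t\|_{L^p(m)}^p$ (or its obvious modification in the non-smooth case), where $f_t = \SS_\BB(t) f_0$; the dissipativity estimate then follows by Gronwall. Concretely, I would compute $\frac{d}{dt}\|f\|_{L^p(m)}^p = p\int (\BB f)\, f\,|f|^{p-2} m^p$ and show that this is bounded above by $p\,a\,\|f\|_{L^p(m)}^p$ for suitable $M, R$. Since $\BB = \BB_0 - M\chi_R$ with $\BB_0 f = \bar a_{ij}\partial_{ij} f - \bar c f$, writing $\BB_0$ in divergence form $\BB_0 f = \partial_i(\bar a_{ij}\partial_j f) - \bar b_i\partial_i f - \bar c f$ (using $\partial_j \bar a_{ij} = \bar b_i$ and $\partial_i \bar b_i = \bar c$... more precisely keeping track of the constants from Lemma~\ref{lem:bar-aij}), I would integrate by parts. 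The diffusion term $\int \partial_i(\bar a_{ij}\partial_j f)\,f|f|^{p-2}m^p$ produces, after moving derivatives, a manifestly nonpositive term $-(p-1)\int \bar a_{ij}\partial_i f\,\partial_j f\,|f|^{p-2}m^p \le 0$ plus cross terms involving $\partial m$ and the first-order coefficients; the remaining terms are zeroth-order in $f$ after further integration by parts.

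The key computation is to collect all the zeroth-order-in-$f$ contributions into a single multiplier $\psi_{m,p}(v)$ such that
\[
\frac{d}{dt}\|f\|_{L^p(m)}^p \le p\int_{\R^3} \psi_{m,p}(v)\, |f|^p m^p\, dv - pM\int_{\R^3}\chi_R\,|f|^p m^p\,dv,
\]
and then to show $\psi_{m,p}(v) \le a$ for $|v| \ge R$ (choosing $R$ large) while the localized term $-M\chi_R$ absorbs the bounded-$v$ region (choosing $M$ large). The multiplier takes the schematic form
\[
\psi_{m,p} = \left(\frac1p - 1\right)\frac{\bar a_{ij}\partial_i m\,\partial_j m}{m^2} + \frac{2}{p}\,\frac{\bar a_{ij}\partial_{ij}m}{m} + \left(1 - \frac2p\right)\frac{\bar b_i \partial_i m}{m} + \text{(terms like } \tfrac1p\partial_i\bar b_i, \tfrac1p \bar c, \ldots) \; - \; \bar c,
\]
up to getting the exact constants right. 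For $m = \la v\ra^k$: using $\bar b_i(v) = -\ell_1(v) v_i$ with $\ell_1(v)\sim 2|v|^\gamma$, $\bar a_{ij}\partial_{ij}m \sim$ lower order, and $\bar c(v) = -c*\mu$ with $c(z) = -2(\gamma+3)|z|^\gamma$ so $-\bar c(v) = -2(\gamma+3)J_\gamma(v) \sim -2(\gamma+3)|v|^\gamma$, the dominant terms at infinity combine. The crucial point is that the genuinely negative contribution $-\bar c \sim -2(\gamma+3)|v|^\gamma$ and the transport term $(1-2/p)\bar b_i\partial_i m / m = (1-2/p)(-\ell_1) k \sim -2k(1-2/p)|v|^\gamma$ must dominate the positive quadratic-in-$\partial m$ term, which is $\sim (1/p-1)\ell_1 k^2/|v|^2 = O(|v|^{\gamma-2})$ and hence negligible when $\gamma \in (0,1]$; this is exactly why $a_{m,p} = -\infty$ for $\gamma > 0$, and why for $\gamma = 0$ one only gets $\psi_{m,p} \to 2[3(1-1/p) - k]$ from the surviving constant-order terms. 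For the exponential weight $m = \exp(r\la v\ra^s)$: here $\partial_i m / m = r s \la v\ra^{s-2} v_i$, so the quadratic term is $(1/p-1)\ell_1 r^2 s^2 \la v\ra^{2s-4}|v|^2 \sim (1/p-1) 2 r^2 s^2 |v|^{\gamma + 2s - 2}$ which for $s < 2$ has exponent $\gamma + 2s - 2 < \gamma + 2$, dominated by the negative $\bar a_{ij}$-free... wait, one must check against the transport term of order $|v|^{\gamma + s}$; since $2s - 2 < s$ for $s < 2$, the negative transport term wins, giving $a_{m,p} = -\infty$; the borderline $s = 2$ forces the restriction $r < 1/(2p)$ so that the coefficient stays negative.

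Here is the main obstacle I anticipate. First, the case $p \in (1,\infty)$ versus $p = 1$: for $p > 1$ the function $|f|^{p-2}$ is singular where $f = 0$ and $|f|^p$ is only $C^1$, so the formal differentiation and integrations by parts need the standard regularization argument (replace $|f|$ by $(\eps^2 + |f|^2)^{1/2}$, or work with the renormalized formulation), and for $p = 1$ one replaces $|f|$ by a smooth even convex approximation of the absolute value; this is routine but must be stated. Second, and more substantively, getting the constants in $\psi_{m,p}$ exactly right so that in the $\gamma = 0$ case one recovers precisely $a_{m,p} = 2[3(1-1/p) - k]$ requires carefully tracking $\bar a_{ij}\partial_{ij}m$, $\partial_i \bar b_i$, and $\bar c$ to constant order. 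The estimates from Lemma~\ref{lem:Jalpha} enter precisely here: e.g. $\bar c(v) = -2(\gamma+3)J_\gamma(v)$ and $\bar a_{ii}(v) = 2 J_{\gamma+2}(v)$, and the remark that ``the dominant part of the upper bound of $J_\alpha$ has coefficient $1$'' is what guarantees the competition between the positive and negative terms is decided by the honest leading coefficients and not by some spurious constant. The final two choices ($R$ first, to make $\psi_{m,p} \le a$ outside $B_R$, using $\psi_{m,p}(v) \to a_{m,p} < a$; then $M$, to dominate $\sup_{|v|\le 2R}\psi_{m,p}(v)$ on the support of $\chi_R$) are straightforward once the multiplier estimate is in hand.
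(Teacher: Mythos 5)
Your proposal follows essentially the same route as the paper: an $L^p(m)$ energy estimate, integration by parts using $\partial_j \bar a_{ij}=\bar b_i$ and $\partial_i \bar b_i=\bar c$, collection of all zeroth-order contributions into a multiplier whose large-$|v|$ asymptotics (via $\bar b_i=-\ell_1(v)v_i$, $\bar a_{ii}=2J_{\gamma+2}$, $\bar c=-2(\gamma+3)J_\gamma$) gives the abscissa $a_{m,p}$, followed by choosing $R$ and then $M$. The paper's only refinement is the change of unknown $h=m^\theta f$ with a tunable $\theta$; your direct computation corresponds to $\theta=0$ and delivers the same abscissas, but be aware that the correct coefficient of the quadratic term $\bar a_{ij}\partial_i m\,\partial_j m/m^2$ is $+(p-1)$, not $1/p-1$, and it is exactly this positive coefficient that forces the restriction $r<1/(2p)$ at the borderline $s=2$ — the constant-tracking you rightly flagged as the remaining work.
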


\begin{proof}[Proof of Lemma~\ref{lem:hypo}]
We split the proof into four steps.

\medskip
\noindent
{\it Step 1.}
Let us denote $\Phi'(z) = |z|^{p-1}\mathrm{sign}(z)$ and consider the equation
$$
\partial_t f = \BB f = \BB_0 f -M\chi_R f.
$$
For all $1\leq p <+\infty$, we have
\beqn\label{eq:SSf}
\bal
\frac{d}{dt} \| f \|_{L^p(m)} 
&=\| f\|_{L^p(m)}^{1-p} \left\{ \int (\BB f) \Phi'(f) m^p \right\} \\
&= \| f\|_{L^p(m)}^{1-p} \left\{ \int (\BB_0 f) \Phi'(f) m^p - \int (M\chi_R f) \Phi'(f) m^p  \right\}
\eal
\eeqn
with, from \eqref{eq:A0B0} and \eqref{eq:oplandau},
\begin{align*}
\int (\BB_0 f) \Phi'(f) \, m^p
&= \int \bar a_{ij} \partial_{ij} f  \, \Phi'(f) \, m^p
- \int \bar c  \, m^p \, |f|^p 
\end{align*}
Let us denote $h=m^{\theta} f$, for some $\theta$ to be chosen later. For the first term, using $\Phi'(f) = \Phi'(h)\, m^{-\theta(p-1)}$, we have
\begin{align*}
T_1 &= \int \bar a_{ij} \partial_{ij} (h m^{-\theta})  \, \Phi'(h) \, m^{p+\theta(1-p)} \\
&=
- \int \partial_j (h m^{-\theta}) \partial_i \left( \bar a_{ij} \Phi'(h) m^{p+\theta(1-p)}  \right) \\
&= - \int \partial_j (h m^{-\theta})\bar a_{ij} \partial_i \left(  \Phi'(h) m^{p+\theta(1-p)}   \right)
- \int \partial_j (h m^{-\theta}) \bar b_j \, \Phi'(h) m^{p+\theta(1-p)} \\
&=: T_{11} + T_{12}.   
\end{align*}
We also have 
$$
\bal
&\partial_j (h m^{-\theta})\partial_i \left(  \Phi'(h) m^{p+\theta(1-p)}   \right) \\
&\qquad= (p-1)\partial_i h \partial_j h \, m^{p(1-\theta) \,}|h|^{p-2}
+ \frac{[p + \theta(1-p)]}{p} \partial_i m  \partial_j (|h|^p) \, m^{p(1-\theta)-1} \\
&\qquad- \frac{\theta(p-1)}{p} \partial_i (|h|^p) \partial_j m \, m^{p(1-\theta)-1}
-\theta[p-\theta(p-1)] \partial_i m \partial_j m \, m^{p(1-\theta)-2} \, |h|^p,
\eal
$$
then, since $\bar a_{ij}$ is symmetric , it follows
\begin{align*}
T_{11} &=  - (p-1 )\int \bar a_{ij} \partial_i h \partial_j h \, m^{p(1-\theta)} \,|h|^{p-2}\\
&+\left[ 2\theta \frac{(p-1)}{p} - 1 \right] \int \bar a_{ij} \partial_i m \partial_j (|h|^p) \, m^{p(1-\theta)-1} \\
&+ \theta[p-\theta(p-1)] \int \bar a_{ij} \partial_i m \partial_j m \, m^{p(1-\theta)-2}\, |h|^{p}.
\end{align*}
Performing an integration by parts, we obtain
\beqn\label{eq:T11}
\bal
T_{11} &=  - (p-1 )\int \bar a_{ij} \partial_i h \partial_j h \, m^{p(1-\theta)} \,|h|^{p-2}\\
&\quad+\delta_1(p,\theta) \int  \bar b_{i} \partial_i m  \, m^{p(1-\theta)-1}  \,|h|^p \\
&\quad+\delta_1(p,\theta) \int \bar a_{ij} \partial_{ij} m  \, m^{p(1-\theta)-1}  \, |h|^p\\
&\quad+\delta_2(p,\theta) 
\int \bar a_{ij} \partial_i m  \partial_j  m \,  m^{p(1-\theta)-2}  \, |h|^p 
\eal
\eeqn
where
\beqn\label{eq:delta}
\delta_1(p,\theta) :=   1 - 2\theta (1-1/p), 
\qquad
\delta_2(p,\theta) := \delta_1(p,\theta)[p(1-\theta)-1]
+\theta[p-\theta(p-1)].
\eeqn
For the term $T_{12}$ we have
\beqn\label{eq:T12}
\bal
T_{12} &=   
- \int \partial_j (h m^{-\theta}) \bar b_j \, \Phi'(h) m^{p+\theta(1-p)} \\
&= - \int \partial_j h \Phi'(h)  \bar b_j \,  m^{p(1-\theta)} 
 + \theta  \int  h \Phi'(h)  \bar b_j \partial_j m \,  m^{p(1-\theta)-1} \\
&= - \frac1p \int \partial_j ( |h|^p )  \bar b_j \,  m^{p(1-\theta)} 
+ \theta  \int   \bar b_j \partial_j m \,  m^{p(1-\theta)-1}\, |h|^p \\
&=  \frac1p \int  \bar c \,  m^{p(1-\theta)}  \, |h|^p
+   \int   \bar b_j \partial_j m \,  m^{p(1-\theta)-1}\, |h|^p .
\eal
\eeqn
Gathering \eqref{eq:T11} and \eqref{eq:T12} one obtains
\beqn\label{eq:BB0}
\int (\BB_0 f) \Phi'(f) m^p = -(p-1) \int \bar a_{ij} \partial_i (m^\theta f) \partial_j(m^\theta f)  \, m^{p-2\theta}\, |f|^{p-2} + \int \varphi_{m,p,\theta}(v)\, m^p \, |f|^p,
\eeqn
with
\beqn\label{phi_mp}
\bal
\varphi_{m,p,\theta} 
&:=  \delta_1(p,\theta)\left( \bar a_{ij} \,\frac{\partial_{ij} m}{m}\right)
+\delta_2(p,\theta) \left(\bar a_{ij} \, \frac{\partial_i m }{m} \,\frac{\partial_j m}{m} \right) \\
&\quad 
+ (1+\delta_1(p,\theta))\left(\bar b_i \, \frac{\partial_i m}{m}\right)
+ \left(\frac1p-1\right) \bar c,
\eal
\eeqn
where $\delta_1$ and $\delta_2$ are defined in \eqref{eq:delta}.

Let us now split the proof into two different cases: polynomial weight $m$ 
satisfying \eqref{m1} and stretched exponential weight $m$ verifying \eqref{m2}.

\medskip
\noindent
{\it Step 2. Polynomial weight.}
Consider $m= \la v \ra^k$ defined in \eqref{m1}. On the one hand, we have
$$
\bal
\frac{\partial_i m}{m} = k v_i \la v \ra^{-2}, \qquad
\frac{\partial_i m}{m}\, \frac{\partial_j m}{m} = k^2 v_i v_j \la v \ra^{-4},\\
\frac{\partial_{ij} m}{m} = \delta_{ij} \,  k \la v \ra^{-2} + k(k-2) v_i v_j \la v \ra^{-4}.
\eal
$$
Hence, from the definitions \eqref{eq:bc}-\eqref{eq:barabc} and Lemma~\ref{lem:bar-aij} we obtain
\beqn\label{eq:mpoly1}
\bal
\bar a_{ij} \,\frac{\partial_{ij} m}{m} &= (\delta_{ij}\bar a_{ij}) \,  k \la v \ra^{-2} +  (\bar a_{ij} v_i v_j)  \, k(k-2)\la v \ra^{-4} \\
&= \bar a_{ii} \,  k \la v \ra^{-2} +  \ell_1(v)  \, k(k-2) |v|^2 \la v \ra^{-4},
\eal
\eeqn
where we recall that the eigenvalue $\ell_1(v) >0$ is defined in Lemma~\ref{lem:bar-aij}. 
Moreover, arguing exactly as above we obtain
\beqn\label{eq:mpoly2}
\bal
\bar a_{ij} \,\frac{\partial_i m}{m}\, \frac{\partial_j m}{m} = (\bar a_{ij} v_i v_j)  \, k^2\la v \ra^{-4}
=   \ell_1(v)  \, k^2 |v|^2 \la v \ra^{-4}
\eal
\eeqn
and also, using the fact that $\bar b_{i}(v) = - \ell_1(v) v_i$ from Lemma~\ref{lem:bar-aij}, 
\beqn\label{eq:mpoly3}
\bal
\bar b_{i} \,\frac{\partial_i m}{m} =  - \ell_1(v) v_i  \, k v_i  \la v \ra^{-2}
=   - \ell_1(v)  \, k |v|^2 \la v \ra^{-2}.
\eal
\eeqn

On the other hand, from item (c) of Lemma~\ref{lem:bar-aij} and definitions \eqref{eq:bc}-\eqref{eq:barabc} we obtain that 
\beqn\label{eq:barac}
\bar a_{ii} = 2 J_{\gamma+2}(v) 
\quad\text{and}\quad 
\bar c = -2(\gamma+3) J_{\gamma}(v),
\eeqn
where $J_\alpha$ is defined in Lemma~\ref{lem:Jalpha}.
It follows from \eqref{phi_mp}--\eqref{eq:barac} that 
\beqn\label{eq:phi1}
\bal
\varphi_{m,p,\theta}(v)
&= 
\delta_1(p,\theta) \,  2 k J_{\gamma+2}(v) \la v \ra^{-2}
+ \delta_1(p,\theta)\, k(k-2) \, \ell_1(v) \, |v|^2 \la v \ra^{-4} \\
&\quad
+\delta_2(p,\theta)\,  k^2  \, \ell_1(v) \, |v|^2 \la v \ra^{-4} 
- [1+\delta_1(p,\theta)]\,   k \, \ell_1(v) \, |v|^2 \la v \ra^{-2}\\
&\quad
+2(\gamma+3)\left(1-\frac1p \right)  J_{\gamma}(v).
\eal
\eeqn 
Since $\ell_1(v) \sim 2 \la v \ra^\gamma$ and $J_\alpha(v) \sim \la v\ra^\alpha$ when $|v|\to +\infty$ by Lemmas \ref{lem:bar-aij} and \ref{lem:Jalpha}, the dominant terms in \eqref{eq:phi1} are the first, fourth and fifth one, all of order $\la v \ra^{\gamma}$.

For $p\in (1,+\infty)$ we choose $\theta = p/[2(p-1)]$, then $\delta_1(p,\theta) = 0$, 
$\delta_2(p,\theta) =  p^2/[4(p-1)]$ and
$$
\bal
\varphi_{m,p,\theta}(v) &= 
\frac{p^2}{4(p-1)}  \,  k^2  \, \ell_1(v) \, |v|^2 \la v \ra^{-4}  -   k\, \ell_1(v) \, |v|^2 \la v \ra^{-2}
+2(\gamma+3)\left(1-\frac1p \right)  J_{\gamma}(v).
\eal
$$
Using Lemma~\ref{lem:Jalpha} to bound $J_\gamma$, we obtain that
\beqn\label{cas:poly}
\left\{
\bal
&\limsup_{|v| \to \infty} \varphi_{m,p,\theta}(v) \le -2 \left[k-3(1-1/p)\right] , & \text{if } \gamma=0 ,\\
&\limsup_{|v| \to \infty} \varphi_{m,p,\theta}(v) \le -2 \left[k-(\gamma+3)(1-1/p)\right]\la v\ra^\gamma,   & \text{if } \gamma \in (0,1],
\eal
\right.
\eeqn
and we recall that $k > (\gamma + 3) (1-1/p) $ from \eqref{m1}.

If $p=1$, for all $\theta$, we have $\delta_1(1,\theta) = 1$ and $\delta_2(1,\theta)=0$ which gives
$$
\bal
\varphi_{m,1,\theta}(v) &= 2 k J_{\gamma+2}(v)  \la v \ra^{-2} + k(k-2) \lambda(v) |v|^2 \la v \ra^{-4} - 2k \ell_1(v) |v|^2 \la v \ra^{-2}, 
\eal
$$
and the dominant terms are the first and last one, both of order $\la v \ra^{\gamma}$. Using Lemma~\ref{lem:Jalpha} to bound $J_{\gamma+2}$, we obtain
\beqn\label{cas:poly2}
\left\{
\bal
&\limsup_{|v| \to \infty} \varphi_{m,1,\theta}(v)  \le   -2 k,  & \text{if } \gamma=0, \\
&\limsup_{|v| \to \infty} \varphi_{m,1,\theta}(v) \le -2k \la v\ra^\gamma,  & \text{if } \gamma \in (0,1].
\eal
\right.
\eeqn

\medskip
\noindent
{\it Step 3. Exponential weight.}
We consider now $m=\exp (r\la v \ra^s)$ given by \eqref{m2}. In this case we have
$$
\bal
\frac{\partial_i m}{m} = r s v_i \la v\ra^{s-2},
\qquad
\frac{\partial_i m}{m}\frac{\partial_j m}{m} = r^2 s^2 v_i v_j \la v\ra^{2s-4}, \\
\frac{\partial_{ij} m}{m} = r s \la v\ra^{s-2}  \delta_{ij}
+ r s (s-2) v_i v_j \la v\ra^{s-4}  + r^2 s^2 v_i v_j \la v\ra^{2s-4} .
\eal
$$
It follows from last equation that
\beqn\label{eq:mexp1}
\bal
\bar a_{ij} \,\frac{\partial_{ij} m}{m} &= (\delta_{ij}\bar a_{ij}) \,  r s \la v\ra^{s-2} +  (\bar a_{ij} v_i v_j)  \, r s (s-2)\la v\ra^{s-4} + (\bar a_{ij} v_i v_j)\, r^2 s^2 \la v\ra^{2s-4} \\
&= \bar a_{ii} \,  r s \la v\ra^{s-2} 
+  \ell_1(v)   \, r s (s-2) |v|^2 \la v\ra^{s-4} 
+ \ell_1(v) \, r^2 s^2 |v|^2 \la v\ra^{2s-4},
\eal
\eeqn
where we used Lemma~\ref{lem:bar-aij},
\beqn\label{eq:mexp2}
\bal
\bar a_{ij} \,\frac{\partial_i m}{m}\, \frac{\partial_j m}{m} = (\bar a_{ij} v_i v_j)  \, r^2 s^2 \la v\ra^{2s-4}
=   \ell_1(v)  \, r^2 s^2 |v|^2 \la v\ra^{2s-4}
\eal
\eeqn
and also, using the fact that $\bar b_{i}(v)= - \ell_1(v) v_i$ , 
\beqn\label{eq:mexp3}
\bal
\bar b_{i} \,\frac{\partial_i m}{m} =  - \ell_1(v) v_i  \, r s v_i \la v\ra^{s-2}
=   - \ell_1(v)  \, r s |v|^2 \la v\ra^{s-2}.
\eal
\eeqn

Gathering together \eqref{phi_mp}, \eqref{eq:mexp1}, \eqref{eq:mexp2} and \eqref{eq:mexp3}, and thanks to Lemma~\ref{lem:bar-aij}, it yields
\beqn\label{eq:phiexp}
\bal
\varphi_{m,p,\theta}(v) &= 
\delta_1(p,\theta) \, 2 r s  J_{\gamma+2}(v)  \la v\ra^{s-2}
+\delta_1(p,\theta) \, r s(s-2)  \ell_1(v) |v|^2    \la v\ra^{s-4}\\
&\quad
+\delta_1(p,\theta) \, r^2 s^2  \ell_1(v)  |v|^2  \la v\ra^{2s-4}
+\delta_2(p,\theta) \, r^2 s^2  \ell_1(v) |v|^2 \la v\ra^{2s-4} \\ 
&\quad
- [1+\delta_1(p,\theta)]  \, r s  \ell_1(v) |v|^2  \la v\ra^{s-2}
+2(\gamma+3)\left(1-\frac1p  \right)  \,  J_{\gamma}(v)
\eal
\eeqn 
where we recall that $J_\alpha$ is given in Lemma~\ref{lem:Jalpha}.

Let us choose $\theta=0$ for all cases $p\in[1,+\infty)$. Then $\delta_1(p,0) = 1$, 
$\delta_2(p,0) =  p-1$ and
\beqn\label{eq:phiexp1}
\bal
\varphi_{m,p,0}(v) &= 
 2 r s  J_{\gamma+2}(v)  \la v\ra^{s-2}
+  r s(s-2)  \ell_1(v) |v|^2    \la v\ra^{s-4}
+  p r^2 s^2  \ell_1(v)  |v|^2  \la v\ra^{2s-4}\\
&\quad 
 - 2 r s  \ell_1(v) |v|^2  \la v\ra^{s-2}
+2(\gamma+3)\left(1-\frac1p  \right)  \,  J_{\gamma}(v),
\eal
\eeqn
and we recall that $\ell_1(v) \sim 2 \la v \ra^\gamma$ and $J_\alpha(v) \sim \la v\ra^\alpha$ when $|v|\to +\infty$ by Lemmas \ref{lem:bar-aij} and \ref{lem:Jalpha}.

If $0< s <2$, the dominant terms in \eqref{eq:phiexp1} is the fourth one, of order $\la v\ra^{\gamma+s}$. Then we obtain the asymptotic behaviour
\beqn\label{cas:exp1}
\limsup_{|v| \to \infty} \varphi_{m,p,0}(v) \le -4 r s \la v\ra^{s+\gamma}
\eeqn
and we recall that $s+\gamma>0$.
If $s=2$, the dominant terms in \eqref{eq:phiexp1} are the first, third and fourth one, all of order $\la v\ra^{\gamma+2}$. Hence, using Lemma~\ref{lem:Jalpha} to bound $J_{\gamma+2}$ and Lemma~\ref{lem:bar-aij}, we obtain
\beqn\label{cas:exp2}
\limsup_{|v| \to \infty} \varphi_{m,p,0}(v) \le 4r\left(2p r - 1 \right) \la v\ra^{\gamma+2},
\eeqn
and we recall that $r < 1/(2p)  $ from \eqref{m2}.

\medskip
\noindent
{\it Step 4.}
Finally, gathering Steps 1, 2 and 3, for any $p\in[1,+\infty)$, for any $a>a_{m,p}$, thanks to the asymptotic behaviour of $\varphi_{m,p,\theta}$ in \eqref{cas:poly}-\eqref{cas:poly2}-\eqref{cas:exp1}-\eqref{cas:exp2}, we can choose $M$ and $R$ large enough such that $\varphi_{m,p,\theta}(v) - M\chi_R(v) \leq a$ for all $v\in \R^3$. It follows that the operator $\BB - a =\BB_0-M\chi_R - a$ is dissipative in $L^p(m)$, more precisely, for all 
$f\in L^p(m)$ we have  
\beqn\label{eq:SS_BB}
\forall \, t \ge 0, \quad
\| \SS_\BB(t) f\|_{L^p(m)} \leq e^{at} \| f \|_{L^p(m)}.
\eeqn
Indeed, from \eqref{eq:SSf} and \eqref{eq:BB0} we obtain
$$
\bal
\frac1p\frac{d}{dt} \| f \|_{L^p(m)}^p 
&\leq  -(p-1) \int \bar a_{ij} \partial_i(m^\theta f) \partial_j(m^\theta f) m^{p-2\theta} |f|^{p-2} + \int (\varphi_{m,p,\theta} - M\chi_R ) m^p |f|^p \\
&\leq \int (\varphi_{m,p,\theta} - M\chi_R ) m^p |f|^p \\
&\leq a \int m^p |f|^p
\eal
$$
which yields \eqref{eq:SS_BB}.
\end{proof}

\begin{rem}
Coming back to the case of exponential moment in Step 3, we could also, for $p\in(1,+\infty)$, chose $\theta = p/[2(p-1)]$ as we did for the polynomial weight. This would not change anything for $0<s<2$, however for the case $s=2$ we would obtain
$$
\limsup_{|v| \to \infty} \varphi_{m,p,\theta}(v) \le  \left(\frac{2p^2r^2}{p-1}   -4r\right) \la v\ra^{\gamma+2}
$$
which goes to $-\infty$ when $|v|\to +\infty$ if $r < 2(p-1)/p^2  $, modifying then the conditions on $r$ defined in \eqref{m2}. Using these two computations, a more general condition on $r$ defined in \eqref{m2} in the case $s=2$ would be $r < \max\left\{\frac{1}{2p}, \frac{2(p-1)}{p^2} \right\}$.
\end{rem}

\subsection{Regularisation properties}\label{ssec:reg}
We are now interested in regularisation properties of the operator $\AA$ and the iterated convolutions of $\AA\SS_\BB$, in order to prove assumptions \textit{(2ii)} and \textit{(2iii)} of Theorem~\ref{thm:extension}.
Let us recall the operator $\AA$ defined in \eqref{eq:AB},
$$
\AA g = \AA_0 g + M\chi_R g = (a_{ij}* g)\partial_{ij}\mu - (c* g)\mu + M\chi_R g,
$$
for $M$ and $R$ large enough chosen before.

Thanks to the function $\chi_R$, for any $q\in[1,+\infty)$, $p\geq q$ and any weight function $m_0$, we have
\beqn\label{eq:chiR}
\| M \chi_R g \|_{L^q(m_0)} \leq C \| \chi_R m_0 m^{-1} \|_{L^{pq/(p-q)}} \| g\|_{L^{p}(m)} 
\leq C \| g \|_{L^p(m)},
\eeqn
from which we deduce that $M \chi_R \in \BBB(L^p(m), L^q(m_0))$.

Let us now focus on regularisation estimates for the operator $\AA_0$. First of all we give the following result, which will be useful in the sequel.

\begin{lem}\label{lem:a*g}
Let $\gamma\in [0,1]$ and $\beta \in \N^3$ be a multi-index such that $|\beta|\leq 2$. Then
$$
|\partial_\beta(a_{ij}* g)(v)| \lesssim \la v \ra^{\gamma+2} \| \partial_\beta g \|_{L^1(\la v\ra^{\gamma+2})}
\quad\text{and}\quad
|\partial_\beta(a_{ij}* g)(v)| \lesssim \la v \ra^{\gamma+2-|\beta|} \| g \|_{L^1(\la v\ra^{\gamma+2-|\beta|})}
$$

\end{lem}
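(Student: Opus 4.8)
The plan is to prove both inequalities by pushing the $\partial_\beta$ onto the convenient factor of the convolution and then using the crude pointwise bounds on $a_{ij}$ and its derivatives coming from \eqref{eq:aij}, combined with the elementary Peetre-type inequality $\langle v-w\rangle^{s} \lesssim \langle v\rangle^{s}\langle w\rangle^{s}$, valid for every $s\ge 0$ (which follows from $\langle v-w\rangle^2 \le 2\langle v\rangle^2\langle w\rangle^2$).

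For the first estimate I would write $\partial_\beta(a_{ij}*g) = a_{ij}*(\partial_\beta g)$, which is legitimate under the integrability assumption implicit in the right-hand side. From \eqref{eq:aij} one has $|a_{ij}(z)| \le |z|^{\gamma+2}\le \langle z\rangle^{\gamma+2}$, hence
\[
|\partial_\beta(a_{ij}*g)(v)| \le \int_{\R^3} \langle v-w\rangle^{\gamma+2}\, |\partial_\beta g(w)|\,dw .
\]
Since $\gamma+2\ge 2>0$, applying the Peetre inequality with $s=\gamma+2$ yields $|\partial_\beta(a_{ij}*g)(v)| \lesssim \langle v\rangle^{\gamma+2}\int_{\R^3}\langle w\rangle^{\gamma+2}|\partial_\beta g(w)|\,dw = \langle v\rangle^{\gamma+2}\|\partial_\beta g\|_{L^1(\langle v\rangle^{\gamma+2})}$, which is the first bound.

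For the second estimate I would instead write $\partial_\beta(a_{ij}*g) = (\partial_\beta a_{ij})*g$. Differentiating \eqref{eq:aij} and using that $a_{ij}$ is homogeneous of degree $\gamma+2$ (and smooth away from the origin), one gets, for $|\beta|\le 2$, the pointwise bound $|\partial_\beta a_{ij}(z)|\lesssim |z|^{\gamma+2-|\beta|}$. Here the exponent $\gamma+2-|\beta|$ is nonnegative because $\gamma\ge 0$, so this right-hand side is locally bounded (in particular $\partial_\beta a_{ij}\in L^1_{\mathrm{loc}}$, which is all that is needed to move the derivatives onto $a_{ij}$ in the convolution), and one may upgrade the bound to $|\partial_\beta a_{ij}(z)|\lesssim \langle z\rangle^{\gamma+2-|\beta|}$ for all $z$ (small $|z|$ contributing a bounded term, large $|z|$ matching the growth). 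Then, exactly as above,
\[
|\partial_\beta(a_{ij}*g)(v)| \lesssim \int_{\R^3} \langle v-w\rangle^{\gamma+2-|\beta|}|g(w)|\,dw \lesssim \langle v\rangle^{\gamma+2-|\beta|}\|g\|_{L^1(\langle v\rangle^{\gamma+2-|\beta|})},
\]
using the Peetre inequality with the nonnegative exponent $s=\gamma+2-|\beta|$, which is the second bound.

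There is no real obstacle: the only points requiring a modicum of care are the behaviour of $\partial_\beta a_{ij}$ near the origin — harmless precisely because $\gamma+2-|\beta|\ge 0$, so no boundary/Dirac terms appear when transferring derivatives onto $a_{ij}$ — and the (standard) justification that $\partial_\beta$ may be placed on either factor of the convolution under the integrability hypotheses implicit in the statement.
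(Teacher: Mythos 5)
Your proof is correct and follows essentially the same route as the paper: put the derivatives on $g$ for the first bound and on $a_{ij}$ for the second, use $|a_{ij}(z)|\le |z|^{\gamma+2}$ and $|\partial_\beta a_{ij}(z)|\lesssim |z|^{\gamma+2-|\beta|}$ (which requires $\gamma+2-|\beta|\ge 0$), and conclude with the elementary inequality $\la v-v_*\ra^{s}\lesssim \la v\ra^{s}\la v_*\ra^{s}$ for $s\ge 0$. Your remarks on the justification of moving the derivative and on the behaviour near the origin are harmless additions; nothing is missing.
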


\begin{proof}[Proof of Lemma \ref{lem:a*g}]
First of all, we write $\partial_\beta(a_{ij}* g) = a_{ij}* \partial_\beta g$ and then
$$
|(a_{ij}* \partial_\beta g)(v)| \leq \int |a_{ij}(v-v_*)| |\partial_\beta g_*|\, dv_*.
$$
For $\gamma\in [0,1]$ we have $|a_{ij}(v-v_*)| \leq |v-v_*|^{\gamma + 2} \leq C \la v\ra^{\gamma+2} \la v_* \ra^{\gamma+2}$, which yields
$$
|(a_{ij}* \partial_\beta g)(v)| \lesssim \la v \ra^{\gamma+2} \| \partial_\beta g \|_{L^1(\la v\ra^{\gamma+2})}.
$$
Finally, writing $\partial_\beta(a_{ij}* g) = \partial_\beta a_{ij}*  g$ 
and using that 
$$
|\partial_\beta a_{ij} (v-v_*)| 
\lesssim |v-v_*|^{\gamma+2-|\beta|} 
\lesssim \la v\ra^{\gamma +2-|\beta|} \la v_*\ra^{\gamma+2-|\beta|}
$$
from Lemma~\ref{lem:bar-aij} and because $\gamma+2-|\beta| \geq 0$, it follows
$$
|(\partial_\beta a_{ij}* g)(v)| 
\lesssim \int \la v\ra^{\gamma +2-|\beta|} \la v_*\ra^{\gamma+2-|\beta|} |g_*| \, dv_*
\lesssim \la v \ra^{\gamma+2-|\beta|} \| g \|_{L^1(\la v\ra^{\gamma+2-|\beta|})},
$$
which finishes the proof.
\end{proof}

\begin{lem}\label{lem:A0}
Let $\gamma\in[0,1]$ and $p\in [1,+\infty]$. Then we have
\beqn\label{eq:lemregA0}
\|\AA_0 g \|_{L^p(m)}
\leq C_\mu \left( 
\| g\|_{L^{1}(\la v\ra^{\gamma+2})}
+ \| g\|_{L^{1}(\la v\ra^\gamma)} 
\right).
\eeqn
As a consequence, $\AA_0 \in \BBB(L^p(m), L^1(\la v \ra^{\gamma+2}))$ and also $\AA_0 \in \BBB(L^p(m))$.

\end{lem}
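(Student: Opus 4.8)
The strategy is to estimate pointwise in $v$ the two pieces of $\AA_0 g = (a_{ij}*g)\partial_{ij}\mu - (c*g)\mu$, exploiting the strong Gaussian decay of $\partial_{ij}\mu$ and $\mu$ to absorb any polynomial weight $m$ and any power of $\la v\ra$ coming from the convolution kernels. First I would recall that $\partial_{ij}\mu(v) = (v_iv_j - \delta_{ij})\mu(v)$, so both $|\partial_{ij}\mu(v)|$ and $|\mu(v)|$ are bounded by $C\la v\ra^2 e^{-|v|^2/2}$; since the weight $m$ is at most a fixed polynomial (from \eqref{m1}) or a stretched exponential of order $s\le 2$ with small constant (from \eqref{m2}), in every admissible case the product $m(v)\,\la v\ra^{N}e^{-|v|^2/2}$ is bounded uniformly in $v$ (for $s=2$ this uses $r<1/(2p)\le 1/2$, so $r<1/2$ and the Gaussian still wins; for $s<2$ and the polynomial case it is immediate). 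Hence $\|m\,\partial_{ij}\mu\|_{L^p}$ and $\|m\,\mu\|_{L^p}$ are finite constants $C_\mu$ for every $p\in[1,+\infty]$.

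Next I would bound the convolution factors pointwise. By Lemma~\ref{lem:a*g} with $\beta = 0$, $|(a_{ij}*g)(v)| \lesssim \la v\ra^{\gamma+2}\,\|g\|_{L^1(\la v\ra^{\gamma+2})}$. For the scalar term, since $c(z) = -2(\gamma+3)|z|^\gamma$ and $0\le\gamma\le1$ we have $|c(v-v_*)| \lesssim |v-v_*|^\gamma \lesssim \la v\ra^\gamma\la v_*\ra^\gamma$, whence $|(c*g)(v)| \lesssim \la v\ra^\gamma\,\|g\|_{L^1(\la v\ra^\gamma)}$ by the same Peetre-inequality argument as in the proof of Lemma~\ref{lem:a*g}. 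Combining,
\[
|m(v)\,\AA_0 g(v)| \lesssim m(v)\,\la v\ra^{\gamma+2}|\partial_{ij}\mu(v)|\,\|g\|_{L^1(\la v\ra^{\gamma+2})} + m(v)\,\la v\ra^\gamma|\mu(v)|\,\|g\|_{L^1(\la v\ra^\gamma)},
\]
and the two $v$-dependent prefactors $m(v)\la v\ra^{\gamma+2}|\partial_{ij}\mu(v)|$ and $m(v)\la v\ra^\gamma|\mu(v)|$ are, again by the Gaussian-domination observation, bounded functions lying in every $L^p$, $p\in[1,+\infty]$, with norm a constant $C_\mu$. Taking the $L^p$-norm in $v$ gives \eqref{eq:lemregA0}.

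Finally, the two stated mapping properties follow by comparing norms: $\|g\|_{L^1(\la v\ra^\gamma)} \le \|g\|_{L^1(\la v\ra^{\gamma+2})}$, and $\|g\|_{L^1(\la v\ra^{\gamma+2})} \lesssim \|g\|_{L^p(m)}$ whenever $m^{-1}\la v\ra^{\gamma+2} \in L^{p'}$, which holds for all admissible $m$ — for the polynomial weight this is exactly the condition $k > \gamma+2+3(1-1/p)$ imposed in \eqref{m1}, and for the exponential weight it is trivial. Hence the right-hand side of \eqref{eq:lemregA0} is controlled by $\|g\|_{L^p(m)}$, giving $\AA_0 \in \BBB(L^p(m), L^1(\la v\ra^{\gamma+2}))$; and since $\AA_0 g$ itself carries a Gaussian factor, $\|\AA_0 g\|_{L^p(m)}$ is directly bounded by the same right-hand side, giving $\AA_0 \in \BBB(L^p(m))$. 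The only mildly delicate point is checking the weight bookkeeping in the $s=2$ exponential case, where one must use $2r<1$ strictly so that $e^{r\la v\ra^2}e^{-|v|^2/2}$ still decays; everything else is routine.
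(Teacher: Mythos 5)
Your proposal is correct and follows essentially the same route as the paper: the same splitting of $\AA_0 g$ into its two pieces, the same pointwise bounds $|(a_{ij}*g)(v)|\lesssim \la v\ra^{\gamma+2}\|g\|_{L^1(\la v\ra^{\gamma+2})}$ and $|(c*g)(v)|\lesssim \la v\ra^{\gamma}\|g\|_{L^1(\la v\ra^{\gamma})}$ absorbed by the Gaussian factors $\partial_{ij}\mu$ and $\mu$, and the same H\"older/duality step $\|g\|_{L^1(\la v\ra^{\gamma+2})}\lesssim\|g\|_{L^p(m)}$ (using $k>\gamma+2+3(1-1/p)$) for the boundedness consequences. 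No gaps.
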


\begin{proof}[Proof of Lemma~\ref{lem:A0}]
For the first inequality, we write
$$
\|\AA_0 g \|_{L^p(m)} \leq \|(a_{ij}* g)\partial_{ij}\mu \|_{L^p(m)} 
+ \|(c* g)\mu \|_{L^p(m)} .
$$
For the first term, using Lemma~\ref{lem:a*g}, we compute
$$
\bal
\|(a_{ij}* g)\partial_{ij}\mu \|_{L^p(m)}^p 
&\leq C\,  \|  g \|_{L^1(\la v\ra^{\gamma+2})}^p \int \la v \ra^{(\gamma+2)p} |\partial_{ij} \mu(v)|^p m^p(v) \, dv \\
&\leq C_\mu \, \|  g \|_{L^1(\la v\ra^{\gamma+2})}^p .
\eal
$$
Arguing in the same way, we also obtain
$$
\bal
\|(c* g)\mu \|_{L^p(m)}^p 
&\leq C\,  \|  g \|_{L^1(\la v\ra^{\gamma})}^p \int \la v \ra^{\gamma p} |\mu(v)|^p m^p(v)\, dv \\
&\leq C_\mu \, \|  g \|_{L^1(\la v\ra^{\gamma})}^p ,
\eal
$$
which completes the proof of the first inequality of the lemma.

Then we compute, for some $\sigma>0$ and using H\"older's inequality,
$$
\bal
\| g\|_{L^{1}(\la v\ra^{\gamma+2})}
&\leq \left( \int  \la v \ra^{-\sigma p/(p-1)} \right)^{(p-1)/p}
\| g \|_{L^p(\la v \ra^{\gamma+2+\sigma})} \\
&\leq C \| g \|_{L^p(\la v \ra^{\gamma+2+\sigma})},
\eal
$$ 
if $\sigma > 3(1-1/p)$. This implies that $\|\AA_0 g \|_{L^p(m)} \leq C_\mu \| g \|_{L^p(m)}$ 
since $k > \gamma+2 + 3(1-1/p)$ when $m=\la v \ra^k$ satisfies \eqref{m1} or $m=e^{r\la v \ra ^s}$ satisfies $\eqref{m2}$.
\end{proof}

\begin{cor}\label{cor:AB}
Let $p \in [2,+\infty]$. Then $\AA \in \BBB (L^p(m), L^2(\mu^{-1/2}))$ and for any $a>a_{m,p}$ we have
$$
\| \AA\SS_\BB(t) \|_{\BBB (L^p(m), L^2(\mu^{-1/2}))} \leq C_a e^{at}.
$$

\end{cor}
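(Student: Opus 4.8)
The plan is to prove the two assertions in turn: first that $\AA$ maps $L^p(m)$ boundedly into $L^2(\mu^{-1/2})$, and then to obtain the time-decay by composing this boundedness with the semigroup bound from Lemma~\ref{lem:hypo}.

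\emph{Step 1: $\AA \in \BBB(L^p(m),L^2(\mu^{-1/2}))$.} Write $\AA = \AA_0 + M\chi_R$ with $\AA_0 g = (a_{ij}*g)\partial_{ij}\mu - (c*g)\mu$. For $\AA_0$ I would rerun the argument of Lemma~\ref{lem:A0}, but now exploiting the Gaussian factors $\partial_{ij}\mu$ and $\mu$: Lemma~\ref{lem:a*g} gives the pointwise bounds $|(a_{ij}*g)(v)|\lesssim \la v\ra^{\gamma+2}\|g\|_{L^1(\la v\ra^{\gamma+2})}$ and $|(c*g)(v)|\lesssim \la v\ra^{\gamma}\|g\|_{L^1(\la v\ra^{\gamma})}$, hence
\[
\|\AA_0 g\|_{L^2(\mu^{-1/2})} \lesssim \|g\|_{L^1(\la v\ra^{\gamma+2})}\,\|\la v\ra^{\gamma+2}|\partial_{ij}\mu|\,\mu^{-1/2}\|_{L^2} + \|g\|_{L^1(\la v\ra^{\gamma})}\,\|\la v\ra^{\gamma}\mu\,\mu^{-1/2}\|_{L^2},
\]
and the two $L^2$-norms on the right are finite because $\partial_{ij}\mu$ and $\mu$ decay like Gaussians while $\mu^{-1/2}\sim e^{|v|^2/4}$. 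It then remains to dominate $\|g\|_{L^1(\la v\ra^{\gamma+2})}$ and $\|g\|_{L^1(\la v\ra^{\gamma})}$ by $\|g\|_{L^p(m)}$; this is done by Hölder's inequality exactly as in the proof of Lemma~\ref{lem:A0}, and uses precisely the condition $k>\gamma+2+3(1-1/p)$ from \eqref{m1} (the exponential case \eqref{m2} being immediate). For the remaining piece $M\chi_R g$ I would apply \eqref{eq:chiR} with $q=2$, which is legitimate since $p\ge 2$, and with weight $m_0=\mu^{-1/2}$: because $\chi_R$ is compactly supported, $\chi_R\,\mu^{-1/2}\,m^{-1}\in L^{2p/(p-2)}$ (with the usual convention for $p=2$ and $p=\infty$), so $M\chi_R\in\BBB(L^p(m),L^2(\mu^{-1/2}))$. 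Summing the two contributions gives $\AA\in\BBB(L^p(m),L^2(\mu^{-1/2}))$.

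\emph{Step 2: the time-weighted bound.} Writing $\AA\SS_\BB(t) = \AA\circ\SS_\BB(t)$ as the composition of $\SS_\BB(t)\in\BBB(L^p(m))$ with $\AA\in\BBB(L^p(m),L^2(\mu^{-1/2}))$ from Step~1, for any $a>a_{m,p}$ Lemma~\ref{lem:hypo} yields
\[
\|\AA\SS_\BB(t)\|_{\BBB(L^p(m),L^2(\mu^{-1/2}))} \le \|\AA\|_{\BBB(L^p(m),L^2(\mu^{-1/2}))}\,\|\SS_\BB(t)\|_{\BBB(L^p(m))} \le C_a\, e^{at},
\]
which is the claim (for $p=+\infty$ one notes that the dissipativity computation of Lemma~\ref{lem:hypo} still provides $\|\SS_\BB(t)\|_{\BBB(L^\infty(m))}\le e^{at}$, or one argues by interpolation/duality from the finite-$p$ cases).

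\emph{Main point.} There is no real obstacle here: the statement is a straightforward combination of Lemmas~\ref{lem:a*g}, \ref{lem:A0}, \ref{lem:hypo} and the elementary estimate \eqref{eq:chiR}. The only thing to keep track of is that the target is the \emph{small} space $L^2(\mu^{-1/2})$ and not merely $L^p(m)$, which is exactly where the Gaussian localisation carried by $\partial_{ij}\mu$, $\mu$ (in $\AA_0$) and by $\chi_R$ (in the second term) is used, together with the restriction $p\ge2$ needed to invoke \eqref{eq:chiR} with exponent $q=2$.
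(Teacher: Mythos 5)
Your proof is correct and follows essentially the same route as the paper: the paper likewise obtains $\AA\in\BBB(L^p(m),L^2(\mu^{-1/2}))$ by combining the estimate of Lemma~\ref{lem:A0} (whose Gaussian factors $\partial_{ij}\mu$, $\mu$ absorb the weight $\mu^{-1/2}$) with \eqref{eq:chiR} for $q=2\le p$, and then simply composes with the bound $\|\SS_\BB(t)\|_{\BBB(L^p(m))}\le e^{at}$ from Lemma~\ref{lem:hypo}. Your write-up merely makes explicit the adaptation of Lemma~\ref{lem:A0} to the target space $L^2(\mu^{-1/2})$, which the paper leaves implicit.
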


\begin{proof}[Proof of Corollary~\ref{cor:AB}]
From Lemma~\ref{lem:A0} and equation \eqref{eq:chiR} it follows that $\AA \in \BBB (L^p(m), L^2(\mu^{-1/2}))$ for all $p \in [2,+\infty]$. Then we compute using Lemma~\ref{lem:hypo},
$$
\| \AA\SS_\BB(t) f \|_{L^2(\mu^{-1/2})} \leq \| \AA \|_{\BBB (L^p(m), L^2(\mu^{-1/2}))} \, 
\| \SS_\BB(t) f \|_{L^p(m)} \leq C e^{at} \| f\|_{L^p(m)},
$$
which concludes the proof.
\end{proof}

Let us denote $m_0 = e^{r \la v \ra ^2}$ with $r\in(0,1/4)$, then $L^2(\mu^{-1/2}) \subset L^q(m_0)$ for any $1\leq q \leq 2$.

\begin{lem}\label{lem:reg}
There exists  $C>0$ such that for all $1\leq p  < 2$,
\beqn\label{eq:regBB}
\| \SS_\BB(t) f \|_{L^2(m_0)} \leq C\, t^{-\frac32\left(\frac1p - \frac12 \right)} \, e^{a t}\, \|f \|_{L^p(m_0)}, \qquad \forall\, t\geq 0.
\eeqn
As a consequence, for all $1\leq p < 2$ and $m$ satisfying \eqref{m1} or \eqref{m2}, 
for any $a' > a$ we have
\beqn\label{eq:regAB}
\| (\AA\SS_\BB)^{*2}(t) f \|_{L^2(\mu^{-1/2})} \leq C\, e^{a' t}\, \|f \|_{L^p(m)}, \qquad \forall\, t\geq 0.
\eeqn

\end{lem}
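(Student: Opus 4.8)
The plan is to prove \eqref{eq:regBB} first by interpolating between two endpoint estimates for $\SS_\BB(t)$, and then to bootstrap \eqref{eq:regAB} from it by exploiting the gain of integrability along the convolution product $\AA\SS_\BB * \AA\SS_\BB$. For \eqref{eq:regBB}, the natural strategy is an ultracontractivity-type argument: one already knows from Lemma~\ref{lem:hypo} that $\SS_\BB(t)$ maps $L^q(m_0)$ into itself with operator norm $\le e^{at}$ for every $q\in[1,+\infty)$ (note that $m_0=e^{r\la v\ra^2}$ is an admissible weight \eqref{m2} for every $p$ since $r<1/4$). The missing ingredient is an $L^1\to L^\infty$ (or, more conveniently, an $L^p\to L^2$) smoothing estimate of parabolic type, of the form $\|\SS_\BB(t)f\|_{L^2(m_0)}\lesssim t^{-\frac32(1/p-1/2)}e^{at}\|f\|_{L^p(m_0)}$. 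This follows from the fact that $\BB_0$ is a second-order elliptic operator with the uniformly elliptic principal part $\bar a_{ij}$ (by Lemma~\ref{lem:bar-aij}(a), $\min\{\ell_1,\ell_2\}\ge\ell_0>0$ for $\gamma\in(0,1]$, and a localised version suffices for $\gamma=0$), so $\SS_\BB$ enjoys the same on-diagonal heat-kernel type decay as the free heat semigroup in dimension three. Concretely I would run a Nash–Moser iteration, or equivalently differentiate $\frac{d}{dt}\|\SS_\BB(t)f\|_{L^q(m_0)}^q$ as in the proof of Lemma~\ref{lem:hypo}, keep the good term $-(q-1)\int\bar a_{ij}\partial_i(m_0^\theta f)\partial_j(m_0^\theta f)\,m_0^{q-2\theta}|f|^{q-2}$ instead of throwing it away, bound it below by $\ell_0\,c_q\,\|\nabla(|f|^{q/2}m_0^{\cdots})\|_{L^2}^2$, invoke the Sobolev/Nash inequality in $\R^3$, and close the differential inequality to get the claimed polynomial blow-up rate as $t\to0$ together with the exponential factor $e^{at}$.

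Granting \eqref{eq:regBB}, the estimate \eqref{eq:regAB} is obtained by writing out
\[
(\AA\SS_\BB)^{*2}(t)f=\int_0^t \AA\SS_\BB(t-s)\,\AA\SS_\BB(s)f\,ds,
\]
and decomposing the inner integral at $s=t/2$. For $s\in(0,t/2]$ I use \eqref{eq:regBB} (with $m_0$ in place of $m$, which is legitimate once one checks $\|f\|_{L^p(m_0)}\lesssim\|f\|_{L^p(m)}$ is \emph{not} what we want — rather one uses the chain $L^p(m)\hookrightarrow L^p(m_0)$ fails, so instead one keeps $L^p(m)$ throughout and uses that $\AA\SS_\BB(s)$ already lands in $L^2(\mu^{-1/2})\subset L^q(m_0)$ by Corollary~\ref{cor:AB}) to pass from $L^p(m)$ to a higher integrability space with an integrable-in-$s$ singularity $s^{-\frac32(1/p-1/2)}$, noting the exponent is $<1$ precisely because $p\ge 1$ and $p<2$; then $\AA$ is bounded from that space into $L^2(\mu^{-1/2})$ by Lemma~\ref{lem:A0}, and $\SS_\BB(t-s)$ is controlled on $(t/2,t)$ by Lemma~\ref{lem:hypo}. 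For $s\in[t/2,t)$ one argues symmetrically, applying $\SS_\BB(s)$ on $L^p(m)$ first via Lemma~\ref{lem:hypo}, then $\AA$ via Corollary~\ref{cor:AB}, then $\SS_\BB(t-s)$ and the second $\AA$. In both regimes the time integral of $s^{-\frac32(1/p-1/2)}e^{as}\cdot e^{a(t-s)}$ over the relevant half contributes $\lesssim t^{1-\frac32(1/p-1/2)}e^{at}\lesssim C_{a'}e^{a't}$ for any $a'>a$, since a polynomial factor is absorbed by upgrading the exponent.

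The main obstacle is the gain-of-integrability step, i.e.\ establishing \eqref{eq:regBB} with the \emph{sharp} parabolic exponent $\frac32(1/p-1/2)$ and, crucially, with the correct exponential prefactor $e^{at}$ rather than $e^{a t}$ times an uncontrolled power of $t$ for large $t$. The delicate point is that the elliptic lower bound on $\bar a_{ij}$ degenerates as $|v|\to\infty$ in the $v$-direction only in a mild way ($\ell_1(v)\sim 2|v|^\gamma$ still tends to $+\infty$ when $\gamma>0$, so there is in fact no degeneracy for hard potentials, but for $\gamma=0$ one has $\ell_1(v)\to$ const, which is still bounded below), so the Nash inequality can be applied with a uniform constant; one must be careful that the weight $m_0$ and the cutoff term $-M\chi_R$ do not spoil the coercivity, which is exactly the computation already carried out in Steps 1–4 of the proof of Lemma~\ref{lem:hypo} and can be quoted. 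A secondary technical nuisance is checking that $\SS_\BB(t)$ is well-defined and strongly continuous on each $L^q(m_0)$ so that the formal differentiations are justified, but this is standard once hypo-dissipativity is known. Once \eqref{eq:regBB} is in hand, the splitting argument for \eqref{eq:regAB} is routine bookkeeping with $\Gamma$-function estimates for the Beta integral $\int_0^{t/2}s^{-\alpha}\,ds$ with $\alpha=\frac32(1/p-1/2)<1$.
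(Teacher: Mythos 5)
Your proof of \eqref{eq:regBB} is essentially the paper's argument: differentiate $\|\SS_\BB(t)f\|_{L^2(m_0)}^2$, keep the coercive term $-\int\bar a_{ij}\partial_i(m_0f)\partial_j(m_0f)$, bound it below via the uniform ellipticity $\bar a_{ij}\xi_i\xi_j\ge\ell_0|\xi|^2$, apply Nash's inequality to $g=m_0f$, close the resulting differential inequality using the $L^1(m_0)$ bound from Lemma~\ref{lem:hypo} to get the $L^1(m_0)\to L^2(m_0)$ rate $t^{-3/4}e^{at}$, and interpolate (Riesz--Thorin) with the $L^2(m_0)\to L^2(m_0)$ bound. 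That part is fine.

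The derivation of \eqref{eq:regAB}, however, has a genuine gap. You invoke Corollary~\ref{cor:AB} to claim that $\AA\SS_\BB(s)$ ``already lands in $L^2(\mu^{-1/2})$'' starting from $f\in L^p(m)$; but that corollary is stated, and is only true, for $p\in[2,+\infty]$. For $p<2$ the operator $\AA$ does \emph{not} map $L^p(m)$ into $L^2(\mu^{-1/2})$: the obstruction is the term $M\chi_R g$, which is multiplication by a bounded compactly supported function and therefore cannot raise the Lebesgue exponent from $p$ to $2$ (the estimate \eqref{eq:chiR} requires $q\le p$). Indeed, if $\AA\SS_\BB(s)$ did map $L^p(m)\to L^2(\mu^{-1/2})$ for $p<2$, one could take $n=1$ in Theorem~\ref{thm:extension} and the whole of \eqref{eq:regAB} would be superfluous. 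Your attempted repair is therefore circular, and your $s=t/2$ splitting with the singularity $s^{-\frac32(1/p-1/2)}$ attached to the \emph{inner} semigroup cannot be run either, since \eqref{eq:regBB} requires the initial datum in $L^p(m_0)$ and, as you correctly note, $L^p(m)\not\hookrightarrow L^p(m_0)$.

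The missing ingredient is that $\AA$ regularises the \emph{weight} at fixed integrability: $\|\AA g\|_{L^p(m_0)}\lesssim\|g\|_{L^p(m)}$ for the same $p$, because $\AA_0 g=(a_{ij}*g)\partial_{ij}\mu-(c*g)\mu$ carries the Gaussian factors $\partial_{ij}\mu,\mu$ (which absorb $m_0=e^{r\la v\ra^2}$, $r<1/4$) and $M\chi_R g$ has compact support; similarly $\|\AA g\|_{L^2(\mu^{-1/2})}\lesssim\|g\|_{L^2(m_0)}$. With these two bounds the correct chain through $(\AA\SS_\BB)(t-s)(\AA\SS_\BB)(s)$ is: $\SS_\BB(s):L^p(m)\to L^p(m)$ by Lemma~\ref{lem:hypo}; $\AA:L^p(m)\to L^p(m_0)$; $\SS_\BB(t-s):L^p(m_0)\to L^2(m_0)$ by \eqref{eq:regBB}, producing the singularity $(t-s)^{-\frac32(1/p-1/2)}$, which is integrable over $[0,t]$ since the exponent is $<1$ for $p\ge1$ (no splitting at $t/2$ is needed); finally $\AA:L^2(m_0)\to L^2(\mu^{-1/2})$. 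The time integral then yields $t^{1-\frac32(1/p-1/2)}e^{at}\le C_{a'}e^{a't}$ for any $a'>a$, as you anticipated.
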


\begin{proof}[Proof of Lemma \ref{lem:reg}]
Consider the equation $\partial_t f = \BB f$. Then from \eqref{eq:SSf} and \eqref{eq:BB0} we have
$$
\frac12\frac{d}{dt}\| f \|_{L^2(m_0)}^2 = - \int \bar a_{ij} \partial_i(m_0 f) \partial_j (m_0 f) + \int(\varphi_{m_0,2,1} - M\chi_R) m_0^2 f^2 .
$$
From Lemma~\ref{lem:bar-aij} there exists $\ell_0 >0$ such that  $\bar a_{ij} \xi_i \xi_j \geq \ell_0 |\xi|^2$. We obtain
\beqn\label{eq:reg}
\bal
\frac12\frac{d}{dt}\| f \|_{L^2(m_0)}^2 
&\leq - \ell_0 \int | \nabla (m_0 f)|^2   + \int(\varphi_{m_0,2,1} - M\chi_R) m_0^2 f^2 .
\eal 
\eeqn
The weight function $m_0$ satisfies \eqref{m2}, then Lemma~\ref{lem:hypo} holds, more precisely
\beqn\label{eq:hypo-m0}
\| \SS_\BB(t) f \|_{L^p(m_0)} \leq e^{at} \| f\|_{L^p(m_0)}, \qquad \forall\, t\geq 0.
\eeqn
Applying Nash's inequality in $3$-dimension: $ \| g \|_{L^2}^2 \leq c_1 \| \nabla g\|_{L^2}^{6/5} \| g\|_{L^1}^{4/5} $ with $g=m_0 f$ we obtain
$$
\bal
c_1^{-1} \| m_0 f \|_{L^2}^{10/3} \| m_0 f\|_{L^1}^{-4/3} 
&\leq \int |\nabla(m_0 f)|^2 .
\eal
$$
Putting together last inequality with \eqref{eq:reg}, it follows
\beqn\label{eq:reg2}
\bal
\frac12\frac{d}{dt}\|  f \|_{L^2(m_0)}^2
&\leq - C \, \| f \|_{L^2(m_0)}^{10/3} \| f\|_{L^1(m_0)}^{-4/3} + a \| f\|_{L^2(m_0)}^2.
\eal 
\eeqn
Let us denote $x(t) := \| f(t) \|_{L^2(m_0)}^2$ and $y(t) := \| f(t) \|_{L^1(m_0)}$ where $f(t) = \SS_\BB(t) f$. Then we have the following differential inequality $ \dot x(t) \le - C_1 x(t)^{5/3} y(t)^{-4/3} + 2a x(t)$.
From $\eqref{eq:hypo-m0}$ we have $y(t) \leq y_0$ and then 
$$ 
\dot x(t) \le - C_1 x(t)^{5/3} y_0^{-4/3} + 2a x(t).
$$
If $x_0 \leq C y_0$, by \eqref{eq:hypo-m0} we have $x(t) \leq C e^{at} y_0$. 
If $x_0$ is such that $x_0 > [C_1/4a] y_0$, then $x(t) \leq C (y_0^{-4/3} t)^{-3/2}$,
and we obtain 
$$
\| \SS_\BB(t) f \|_{L^2(m_0)} \leq C \,t^{-\frac34} e^{a t}\, \| f\|_{L^1(m_0)}.
$$
Using Riesz-Thorin interpolation theorem to $\SS_\BB(t)$ which acts from $L^2 \to L^2$ with estimate \eqref{eq:hypo-m0} and from $L^1 \to L^2$ with the estimate above, we obtain
\eqref{eq:regBB}.

Let us prove now \eqref{eq:regAB}. From Lemma~\ref{lem:A0} and equation \eqref{eq:chiR} we have the following estimates, for any $p\in[1,+\infty]$,
\beqn\label{eq:regAA}
\bal
\| \AA g \|_{L^2(\mu^{-1/2})} \lesssim \| g \|_{L^2(m_0)} ,
\qquad
\| \AA g \|_{L^p(m_0)} \lesssim \| g \|_{L^p(m)} .
\eal
\eeqn
Hence, by \eqref{eq:regAA} and \eqref{eq:regBB}, for $1\leq p \leq 2$, it follows
\beqn
\bal
\| \AA\SS_\BB(t) f \|_{L^2(\mu^{-1/2})} 
&\lesssim \| \SS_\BB(t) f \|_{L^2(m_0)} 
\lesssim t^{-\frac32\left(\frac1p - \frac12 \right)} \, e^{at}\, \| f \|_{L^p(m_0)}.
\eal
\eeqn
Computing the convolution of $\AA\SS_\BB(t)$ we have
$$
\bal
\| (\AA\SS_\BB)^{*2}(t) f \|_{L^2(\mu^{-1/2})} 
&\lesssim \int_0^t \|\AA\SS_\BB(t-s) \AA\SS_\BB(s) f \|_{L^2(\mu^{-1/2})} \, ds\\
&\lesssim \int_0^t \|\SS_\BB(t-s) \AA\SS_\BB(s) f \|_{L^2(m_0)} \, ds\\
&\lesssim \int_0^t (t-s)^{-\frac32\left(\frac1p - \frac12 \right)} e^{a(t-s)}\,\| \AA\SS_\BB(s) f \|_{L^p(m_0)} \, ds\\
&\lesssim \int_0^t (t-s)^{-\frac32\left(\frac1p - \frac12 \right)} e^{a(t-s)}\,\| \SS_\BB(s) f \|_{L^p(m)} \, ds\\
&\lesssim \int_0^t (t-s)^{-\frac32\left(\frac1p - \frac12 \right)} e^{a(t-s)}\, e^{as} \, \| f \|_{L^p(m)} \, ds\\
&\lesssim t^{\frac12\left(\frac72 - \frac3p \right)} e^{at}\,  \| f \|_{L^p(m)}\\
&\lesssim e^{a't} \, \| f \|_{L^p(m)},
\eal
$$
where we have used in order \eqref{eq:regAA}, \eqref{eq:regBB}, \eqref{eq:regAA}, Lemma~\ref{lem:hypo} and the fact that $(\frac72 - \frac3p ) >0$ for $1\leq p <2$. 
Hence, for all $t\geq 0$, we have $\| (\AA\SS_\BB)^{*2}(t)\|_{\BBB(L^p(m),L^2(\mu^{-1/2}))} \lesssim e^{a't}$, for any $a'>a > a_{m,p}$, where $a_{m,p}$ is defined in \eqref{amp1} and \eqref{amp2}.
\end{proof}


\subsection{Proof of Theorem \ref{thm:trou}}
With the results of Section~\ref{ssec:hypo}, Section~\ref{ssec:reg} and Theorem~\ref{thm:extension}, we are able to prove the semigroup decay for the linearised Landau operator.

\medskip
Let $E = L^2(\mu^{-1/2})$, in which space we already know the spectral gap \eqref{eq:lambda0bis}-\eqref{eq:lambda0}, which gives us assumption (1) of Theorem~\ref{thm:extension}. Let $\EE = L^p(m)$, for any $p\in[1,2]$ and $m$ satisfying \eqref{m1} or \eqref{m2}. We consider the decomposition $\LL = \AA + \BB$ as in \eqref{eq:AB}. For any $a>a_{m,p}$, the operator $\BB-a$ is hypo-dissipative in $\EE$ from Lemma~\ref{lem:hypo}, and this gives assumption \textit{(2i)} of Theorem~\ref{thm:extension}. Moreover, $\AA \in \BBB(\EE)$ and $A \in \BBB(E)$ from Lemma~\ref{lem:A0} and equation~\eqref{eq:chiR}, which gives assumption
\textit{(2ii)} of Theorem~\ref{thm:extension}. Hence we only need to prove assumption \textit{(2iii)} to conclude.

We split the proof into two different cases.

\medskip
\noindent
{\it Case $p=2$.}
In this case we have $E \subset \EE$.  Moreover,
$\AA\SS_\BB(t) \in \BBB(\EE,E)$ with exponential decay rate from Corollary \ref{cor:AB}, which proves assumption \textit{(2iii)} with $n=1$.

\medskip
\noindent
{\it Case $p\in [1,2)$.}
Here $E \subset \EE$ and from Lemma~\ref{lem:reg} we have $(\AA\SS_\BB)^{*2}(t) \in \BBB(\EE,E)$ with exponential decay rate, which gives assumption \textit{(2iii)} with $n=2$.


\section{Proof of the main result}\label{sec:nonlinear}

Recall the Landau operator \eqref{eq:oplandau}
$$
Q(g,h) = (a_{ij}*g)\partial_{ij}h - (c*g)h.
$$
We shall prove some estimates for the nonlinear operator $Q$ before proving the Theorem~\ref{thm:rate}.

\begin{prop}\label{prop:EstimationPotentielDur}
Let $\gamma\in[0,1]$ and $p\in [1,+\infty]$. Then
$$
\| Q(g,h) \|_{L^p(m)}
\lesssim  
\|  g\|_{L^1(\la v\ra^{\gamma+2})}  \| \partial_{ij} h\|_{L^p(m\la v\ra^{\gamma+2})}
+
\| g\|_{L^{1}(\la v\ra^{\gamma})}
\| h \|_{L^p(m \la v \ra^{\gamma})}
$$

\end{prop}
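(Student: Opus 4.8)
The idea is simply to split $Q(g,h)$ into its two constituent pieces according to \eqref{eq:oplandau}, namely $Q(g,h) = (a_{ij}*g)\,\partial_{ij}h - (c*g)\,h$, and to estimate each term by pulling the convolution factor out in $L^\infty$ (against a suitable polynomial weight) and leaving the remaining factor in $L^p(m\la v\ra^{\gamma+2})$ or $L^p(m\la v\ra^{\gamma})$. First I would record the two pointwise bounds on the convolutions: from Lemma~\ref{lem:a*g} with $\beta = 0$ we have $|(a_{ij}*g)(v)| \lesssim \la v\ra^{\gamma+2}\,\| g\|_{L^1(\la v\ra^{\gamma+2})}$, and a completely analogous computation for $c$, using $c(z) = -2(\gamma+3)|z|^\gamma$ together with $|v-v_*|^\gamma \le C\la v\ra^\gamma\la v_*\ra^\gamma$ (valid since $0\le\gamma\le 1$), gives $|(c*g)(v)| \lesssim \la v\ra^{\gamma}\,\| g\|_{L^1(\la v\ra^{\gamma})}$.

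Next I would apply these bounds inside the $L^p(m)$ norm. For the first term, write
$$
\|(a_{ij}*g)\,\partial_{ij}h\|_{L^p(m)}
= \big\| m\,(a_{ij}*g)\,\partial_{ij}h \big\|_{L^p}
\le \big\| \la v\ra^{-(\gamma+2)}(a_{ij}*g) \big\|_{L^\infty}\,
\big\| m\,\la v\ra^{\gamma+2}\,\partial_{ij}h \big\|_{L^p},
$$
and the first factor is $\lesssim \| g\|_{L^1(\la v\ra^{\gamma+2})}$ by the bound above, while the second is $\| \partial_{ij}h\|_{L^p(m\la v\ra^{\gamma+2})}$. For the second term, identically,
$$
\|(c*g)\,h\|_{L^p(m)}
\le \big\| \la v\ra^{-\gamma}(c*g) \big\|_{L^\infty}\,
\big\| m\,\la v\ra^{\gamma}\,h \big\|_{L^p}
\lesssim \| g\|_{L^1(\la v\ra^{\gamma})}\,\| h\|_{L^p(m\la v\ra^{\gamma})}.
$$
Summing the two contributions yields the claimed estimate.

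There is no serious obstacle here; the only minor point requiring a line of its own is the pointwise bound on $c*g$, since Lemma~\ref{lem:a*g} is stated for $a_{ij}$ only, but it follows from the explicit formula for $c$ in \eqref{eq:bc} and the same elementary inequality $|v-v_*|^\alpha \le C\la v\ra^\alpha\la v_*\ra^\alpha$ for $\alpha = \gamma \in [0,1]$ already used in the proof of Lemma~\ref{lem:a*g}. One should also note that the argument is uniform in $p\in[1,+\infty]$ since it uses only Hölder's inequality in the trivial $L^\infty\cdot L^p$ form, so the statement holds for the full range as asserted.
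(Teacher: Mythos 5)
Your proof is correct and follows essentially the same route as the paper: split $Q(g,h)$ into the $(a_{ij}*g)\partial_{ij}h$ and $(c*g)h$ pieces, bound each convolution pointwise by $\la v\ra^{\gamma+2}\|g\|_{L^1(\la v\ra^{\gamma+2})}$ and $\la v\ra^{\gamma}\|g\|_{L^1(\la v\ra^{\gamma})}$ respectively, and conclude by the trivial $L^\infty\cdot L^p$ Hölder estimate. The only cosmetic difference is that the paper obtains the bound on $c*g$ via the second inequality of Lemma~\ref{lem:a*g} with $|\beta|=2$ (using $c=\partial_{ij}a_{ij}$), whereas you re-derive it directly from the explicit formula for $c$; these are the same computation.
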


\begin{proof}[Proof of Proposition~\ref{prop:EstimationPotentielDur}]
We write
$$
\|  Q(g,h) \|_{L^p(m)}
\leq 
\| (a_{ij}*g)\partial_{ij} h \|_{L^p(m)} 
+ \| (c*g) h \|_{L^p(m)}.
$$
Thanks to Lemma~\ref{lem:a*g}
$$
\bal
\| (a_{ij}*g) \partial_{ij} h \|_{L^p(m)}
&\lesssim 
\|  g\|_{L^1(\la v\ra^{\gamma+2})}  \| \partial_{ij} h\|_{L^p(m\la v\ra^{\gamma+2})}
\eal
$$
Moreover, by Lemma~\ref{lem:a*g} one obtains, since $c = \partial_{ij} a_{ij}$ and $|(c * g)(v)| \leq C \la v \ra^{\gamma} \|  g\|_{L^{1}(\la v\ra^{\gamma})}$,  
$$
\| (c * g)  h \|_{L^p(m)}
\lesssim  \| g\|_{L^{1}(\la v\ra^{\gamma})}
\| h \|_{L^p(m \la v \ra^{\gamma})},
$$
and the proof is complete.
\end{proof}


The proof of Theorem~\ref{thm:rate} relies on known results by Desvillettes and Villani \cite{DesVi1,DesVi2} concerning the polynomial decay rate to equilibrium, together with the semigroup decay estimates from Theorem~\ref{thm:trou} and some estimates on the nonlinear operator from Proposition~\ref{prop:EstimationPotentielDur}. We follow the strategy developed 
in \cite{Mouhot2}.

\medskip

Let us first summarise the results on the Cauchy theory for the Landau equation with hard potentials from \cite[Theorems 3, 6 and 7]{DesVi1} and \cite[Theorem 8]{DesVi2}, with a improvement of \cite{safadi} concerning the smoothness effect.

\begin{thm}\label{thm:DV}
Consider $\gamma \in (0,1]$.

\begin{enumerate}[(1)]

\item  Let $f_0 \in L^1(\la v\ra^{2+\delta})$ for some $\delta>0$ and consider a weak solution $f$ to \eqref{eq:landau}, then:

(a) for all $t_0 > 0$, all integer $k>0$ and all $\theta>0$, there exists $C_{t_0}>0$ such that 
$$
\sup_{t\geq t_0} \| f(t,\cdot) \|_{H^k (\la v\ra^{\theta})} \leq C_{t_0}.
$$

(b) for all $t_0 > 0$, $f \in \CC^{\infty} ([t_0, + \infty); \SS(\R^3_v))$.

\item Let $f$ be any weak solution of \eqref{eq:landau} with initial datum $f_0\in L^1(\la v\ra^2)$ satisfying the decay of energy, then for all $t_0 >0$ and all $\theta>0$, there is a constant $C_{t_0}>0$ such that
$$
\sup_{t\geq t_0} \| f(t,\cdot) \|_{L^1 (\la v\ra^{\theta})} \leq C_{t_0}.
$$

\item  If $f$ is a smooth solution of \eqref{eq:landau} (in the sense of (1) above), then for all $t\geq 0$ there is $C>0$ such that
$$
H(f_t | \mu ) := \int_{\R^3} f_t \log \frac{f_t}{\mu}\, dv \leq C (1+t)^{-2/\gamma} 
$$

\end{enumerate}

\end{thm}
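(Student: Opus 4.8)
Since Theorem~\ref{thm:DV} collects statements that are essentially established in \cite{DesVi1,DesVi2}, with the smoothing refinement taken from \cite{safadi}, the plan is to indicate, item by item, the mechanism behind each assertion rather than to reproduce the (lengthy) original arguments.

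\textbf{Moment production (items (2) and part of (1)(a)).} The first step is to test the weak formulation of \eqref{eq:landau} against $\varphi(v)=\la v\ra^{2k}$. Using the symmetrised expression for $\int Q(f,f)\varphi$ together with the explicit forms of $a_{ij}$, $b_i$, $c$ from \eqref{eq:aij}--\eqref{eq:bc}, one is led to a Povzner-type differential inequality whose top-order term, precisely because $\gamma>0$ (hard potentials), carries a negative coefficient:
$$
\frac{d}{dt}\int_{\R^3} f\,\la v\ra^{2k}
\le -K_k \int_{\R^3} f\,\la v\ra^{2k+\gamma}
+ C_k \int_{\R^3} f\,\la v\ra^{2k+\gamma-2},\qquad K_k>0.
$$
Interpolating the right-hand side against the mass and energy — which are bounded thanks to \eqref{f0} and the decay-of-energy condition — turns this into an ODE inequality of the form $y_k'\le -c\,y_k^{1+\gamma/(2k)}+C$ for $y_k(t)=\int f(t)\la v\ra^{2k}$. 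A comparison argument then shows $y_k(t)$ is finite and bounded uniformly for $t\ge t_0$, for every $t_0>0$ and every $k$, even when $f_0$ only has finite mass and energy. This proves (2) and provides the weighted $L^1$ bounds needed below.

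\textbf{Smoothing (items (1)(a) and (1)(b)).} Once all polynomial moments of $f$ are under control, I would treat \eqref{eq:landau} as the nonlinear parabolic equation $\partial_t f=(a_{ij}*f)\partial_{ij}f-(c*f)f$, whose diffusion matrix is uniformly elliptic on bounded sets (its ellipticity degenerating only polynomially, like $\la v\ra^{\gamma}$, at spatial infinity) by virtue of the mass and energy bounds, and whose coefficients, together with their derivatives, are dominated by weighted $L^1$ norms of $f$ through Lemma~\ref{lem:a*g}. A parabolic bootstrap then applies: local $L^2$ energy estimates plus Sobolev embedding (or De Giorgi--Nash--Moser) upgrade $f\in L^\infty_{loc}((0,\infty);L^1)$ to local boundedness, and then, differentiating the equation and absorbing the coefficient growth with the moment bounds, to $H^k(\la v\ra^\theta)$ uniformly for $t\ge t_0$, for all $k,\theta$; this is \cite[Theorems 3, 6 and 7]{DesVi1}. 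The promotion to instantaneous $\CC^\infty$ regularity with Schwartz-class decay, item (1)(b), I would simply quote from \cite{safadi}.

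\textbf{Polynomial entropy decay (item (3)) and main obstacle.} For a solution that is smooth with all moments finite, the entropy dissipation identity \eqref{eq:Df} is rigorous, so $\tfrac{d}{dt}H(f_t|\mu)=-D(f_t)$; inserting the Desvillettes--Villani inequality \eqref{eq:dissipLandauDV2} gives $\tfrac{d}{dt}H(f_t|\mu)\le-\min\bigl\{\delta_1 H(f_t|\mu),\ \delta_2 H(f_t|\mu)^{1+\gamma/2}\bigr\}$. Since $H(f_t|\mu)$ is nonincreasing, after a finite time it falls below the level at which the power term realises the minimum; from then on $\tfrac{d}{dt}H\le-\delta_2 H^{1+\gamma/2}$, which integrates to $H(f_t|\mu)\le C(1+t)^{-2/\gamma}$, while on the preceding bounded interval $H$ is controlled (in fact exponentially small). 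Combining the two regimes yields $H(f_t|\mu)\le C(1+t)^{-2/\gamma}$ for all $t\ge0$, i.e.\ \cite[Theorem 8]{DesVi2}. I expect the genuine difficulty to lie in the smoothing step: although the equation is parabolic, the coefficients $a_{ij}*f$ grow like $\la v\ra^{\gamma+2}$ and the diffusion matrix degenerates in the radial direction and at infinity, so obtaining \emph{weighted} and \emph{uniform-in-time} higher-order bounds requires carefully interweaving the parabolic regularity theory with the moment-production estimates, and not a black-box use of either; item (3), by contrast, is an essentially immediate consequence of the already available functional inequality \eqref{eq:dissipLandauDV2} once (1) legitimises \eqref{eq:Df}.
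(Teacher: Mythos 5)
The paper gives no proof of Theorem~\ref{thm:DV}: it is stated purely as a summary of results quoted from \cite{DesVi1}, \cite{DesVi2} and \cite{safadi}, which is exactly what you do, and your item-by-item sketches (Povzner-type moment production, parabolic bootstrap for the weighted $H^k$ bounds, and integration of the entropy--entropy dissipation inequality \eqref{eq:dissipLandauDV2}) are consistent with those references. The only slip is inconsequential: for hard potentials the radial eigenvalue of $a_{ij}*f$ behaves like $\la v\ra^{\gamma}$ with $\gamma>0$, so the ellipticity is bounded below and does not degenerate at infinity; only the anisotropy between the $\la v\ra^{\gamma}$ and $\la v\ra^{\gamma+2}$ directions requires care.
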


\begin{cor}\label{cor:poly-rate}
For all $t_0 >0$ and all $\ell >0$, there exists $C_{t_0}>0$ such that
$$
\forall\, t\geq t_0, \qquad
\| f_t - \mu \|_{L^1(\la v \ra^\ell)} \leq C_{t_0} (1+t)^{-\frac{1}{2\gamma}}.
$$
\end{cor}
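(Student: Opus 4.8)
The plan is to deduce the corollary from Theorem~\ref{thm:DV} by a soft interpolation argument; no genuinely new estimate is required.

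First I would fix $t_0>0$. Under the standing assumption $f_0\in L^1(\la v\ra^{2+\delta})$, Theorem~\ref{thm:DV}(1) makes the weak solution $f$ smooth on $[t_0,+\infty)$, so Theorem~\ref{thm:DV}(3) applies and gives a constant $C>0$ with $H(f_t|\mu)\le C(1+t)^{-2/\gamma}$ for all $t\ge t_0$. Since $f_t$ and $\mu$ are both probability densities — by the normalisation \eqref{f0} and conservation of mass for weak solutions — the Csisz\'ar--Kullback--Pinsker inequality $\|f_t-\mu\|_{L^1}^2\le 2H(f_t|\mu)$ upgrades this to
\[
\|f_t-\mu\|_{L^1}\le C'\,(1+t)^{-1/\gamma},\qquad \forall\,t\ge t_0 .
\]

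Next I would invoke the elementary weighted interpolation obtained from H\"older's inequality with conjugate exponents $\ell'/(\ell'-\ell)$ and $\ell'/\ell$: for any $0\le \ell<\ell'$ and any $g$,
\[
\|g\|_{L^1(\la v\ra^\ell)}
= \int |g|^{1-\ell/\ell'}\big(|g|\la v\ra^{\ell'}\big)^{\ell/\ell'}
\le \|g\|_{L^1}^{\,1-\ell/\ell'}\,\|g\|_{L^1(\la v\ra^{\ell'})}^{\,\ell/\ell'}.
\]
Applying this to $g=f_t-\mu$ with $\ell'=2\ell$, the first factor is controlled by the $L^1$-decay just obtained, while the second is bounded uniformly in $t\ge t_0$: indeed Theorem~\ref{thm:DV}(2) (applicable since weak solutions satisfy the decay of energy) bounds $\|f_t\|_{L^1(\la v\ra^{2\ell})}$ uniformly, and $\mu$ has finite polynomial moments of every order, so $\|f_t-\mu\|_{L^1(\la v\ra^{2\ell})}\le C_{t_0}$ for $t\ge t_0$. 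Since then $\ell/\ell'=1/2$, this yields
\[
\|f_t-\mu\|_{L^1(\la v\ra^\ell)}\le \big(C'(1+t)^{-1/\gamma}\big)^{1/2}\,C_{t_0}^{1/2}
= C_{t_0}\,(1+t)^{-\frac{1}{2\gamma}},\qquad \forall\,t\ge t_0,
\]
which is exactly the claimed estimate.

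There is essentially no obstacle here: the argument is a routine combination of results already stated. The only points meriting care are the applicability of Csisz\'ar--Kullback--Pinsker (equal masses, guaranteed by \eqref{f0} and mass conservation) and the choice of the auxiliary exponent — taking $\ell'=2\ell$ produces the interpolation weight $1/2$, which converts the $(1+t)^{-1/\gamma}$ decay of the bare $L^1$-norm into the stated $(1+t)^{-1/(2\gamma)}$ decay; any larger $\ell'$ would only improve the rate, so the corollary is stated in this (already sufficient) form.
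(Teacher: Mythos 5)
Your proof is correct and follows essentially the same route as the paper: Csisz\'ar--Kullback--Pinsker applied to the entropy decay of Theorem~\ref{thm:DV}(3) gives the unweighted $L^1$ rate $(1+t)^{-1/\gamma}$, and H\"older interpolation against the uniformly bounded moment $\|f_t-\mu\|_{L^1(\la v\ra^{2\ell})}$ from Theorem~\ref{thm:DV}(2) yields the weighted rate $(1+t)^{-1/(2\gamma)}$. Your added remarks on the applicability of CKP and of part (3) are sound but not substantively different from the paper's argument.
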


\begin{proof}[Proof of Corollary~\ref{cor:poly-rate}]
Let us fixe some $t_0 >0$. First of all, from Theorem \ref{thm:DV} and the Csisz\'ar-Kullback-Pinsker inequality  (see e.g.\ \cite[Remark 22.12]{VillaniOTO&N})
$$
\| f - \mu \|_{L^1(\R^3)} \leq C \sqrt{ H(F | \mu) },  
$$
we obtain
\beqn\label{eq:convL1}
\forall\, t\geq 0, \qquad
\| f_t - \mu \|_{L^1 (\R^3)} \leq C (1+t)^{-1/\gamma}.
\eeqn
Then, using the bounds of Theorem~\ref{thm:DV} and H\"older's inequality we obtain
$$
\forall\, t\geq t_0, \qquad
\| f_t - \mu \|_{L^1(\la v\ra^{\ell})} 
\leq    \| f_t - \mu \|_{L^1(\la v\ra^{2\ell})}^{1/2} \| f_t - \mu \|_{L^1(\R^3)}^{1/2}
\leq C_{t_0} (1+t)^{-\frac{1}{2\gamma}}.
$$
\end{proof}

Let $f = \mu + h$, then $h=h(t,v)$ satisfies the equation
\beqn\label{eq:cauchylineaire}
\left\{
\bal
\partial_t h &= \LL h + Q(h,h)\\
h_{|t=0} &= h_0 = f_0 - \mu  .
\eal
\right.
\eeqn
Since $f_0 = \mu + h_0$ has same mass, momentum and energy than $\mu$, we have $\Pi h_0 = 0$ and for all $t\geq 0$, thanks to the conservation of these quantities, we also have $\Pi h_t = \Pi Q(h_t,h_t)= 0$.

Before giving the proof of Theorem~\ref{thm:rate}, we state and prove the following lemma which will be important for the sequel.

\begin{lem}\label{lem:exp-rate}
Consider $m=\la v\ra^k$ satisfying \eqref{m1}. There exists $\epsilon >0$ such that, if the solution $h$ of \eqref{eq:cauchylineaire} satisfies
$$
\| h_0 \|_{L^1(\la v \ra^k)} \le \epsilon \quad\text{and}\quad
\| h_t \|_{L^1(\la v\ra^{\ell})} \leq \epsilon, \quad \forall\, t\geq 0,
$$
with $\ell := 2\gamma+8 + k$, and if 
$$
\forall\, t\geq 0, \qquad
\| h_t \|_{H^4(\la v\ra^{\ell})} \leq C,
$$
then there is $C'>0$ such that
$$
\forall\, t\geq 0, \qquad
\| h_t \|_{L^1(\la v\ra^{k})} \leq C' e^{-\lambda_0 t}\, \| h_0 \|_{L^1(\la v\ra^{k})},
$$
where $\lambda_0>0$ is the spectral gap in \eqref{eq:lambda0bis}-\eqref{eq:lambda0}.
\end{lem}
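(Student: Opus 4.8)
The strategy is a bootstrap / Duhamel argument coupling the linear semigroup decay of Theorem~\ref{thm:trou} with the bilinear estimate of Proposition~\ref{prop:EstimationPotentielDur}. Work in $E = L^1(m)$ with $m = \la v \ra^k$ satisfying \eqref{m1}; by Remark~\ref{rem:trou}(1), since $\gamma \in (0,1]$ we have $a_{m,p} = -\infty$, so Theorem~\ref{thm:trou} gives $\|\SS_\LL(t)(h-\Pi h)\|_{L^1(m)} \le C_{\lambda_0} e^{-\lambda_0 t}\|h - \Pi h\|_{L^1(m)}$. Since $f_0$ has the same mass, momentum and energy as $\mu$, we have $\Pi h_0 = 0$ and $\Pi h_t = 0$ and $\Pi Q(h_t,h_t) = 0$ for all $t \ge 0$ by conservation. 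Write the mild (Duhamel) formulation
$$
h_t = \SS_\LL(t) h_0 + \int_0^t \SS_\LL(t-\tau) Q(h_\tau, h_\tau)\, d\tau,
$$
where every term lies in $\NN(\LL)^\perp$, so the semigroup decay applies to each.

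\textbf{Key steps.} First I would control the nonlinear term in $L^1(m)$. By Proposition~\ref{prop:EstimationPotentielDur} with $p=1$,
$$
\| Q(h_\tau, h_\tau) \|_{L^1(m)} \lesssim \| h_\tau \|_{L^1(\la v \ra^{\gamma+2})} \, \| \partial_{ij} h_\tau \|_{L^1(m \la v \ra^{\gamma+2})} + \| h_\tau \|_{L^1(\la v \ra^\gamma)} \, \| h_\tau \|_{L^1(m \la v \ra^\gamma)}.
$$
The point of the exponent choice $\ell := 2\gamma + 8 + k$ is that $m\la v\ra^{\gamma+2} = \la v\ra^{k+\gamma+2}$ and all the weighted norms with derivatives up to order $2$ appearing above are bounded by $C \|h_\tau\|_{H^2(\la v\ra^\ell)} \le C$ (using the hypothesis $\|h_t\|_{H^4(\la v\ra^\ell)} \le C$ and $\ell \ge k+\gamma+2$ with room to spare), while the unweighted-ish factors $\|h_\tau\|_{L^1(\la v\ra^{\gamma+2})}$, $\|h_\tau\|_{L^1(\la v\ra^\gamma)}$ are $\le \|h_\tau\|_{L^1(\la v\ra^\ell)} \le \epsilon$ by hypothesis. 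Hence
$$
\| Q(h_\tau, h_\tau) \|_{L^1(m)} \le C\,\epsilon\, \Theta(\tau), \qquad \text{where } \Theta(\tau) := \|h_\tau\|_{L^1(m)}^{\alpha}\cdots
$$
— actually cleaner: I would instead interpolate so that one factor becomes $\|h_\tau\|_{L^1(m)}$ itself and the other factor is bounded by $\epsilon$ times a constant. Concretely, bound
$$
\| Q(h_\tau, h_\tau) \|_{L^1(m)} \le C\, \| h_\tau\|_{L^1(m)}\,\big(\epsilon + \|h_\tau\|_{H^4(\la v\ra^\ell)}\big)\cdot\varepsilon_1,
$$
which is not quite dimensionally symmetric; the honest route is to write $Q(h,h)$ so that the ``first slot'' $h$ (the one hit only by convolution, giving the $L^1(\la v\ra^{\gamma+2})$ factor) is estimated by $\epsilon$ and the ``second slot'' $h$ (the one differentiated, giving $\|\partial_{ij}h\|_{L^1(m\la v\ra^{\gamma+2})}$) is estimated by a constant via $H^4$ — this yields $\|Q(h_\tau,h_\tau)\|_{L^1(m)} \le C\epsilon$ uniformly. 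Then Duhamel + Theorem~\ref{thm:trou} gives
$$
\| h_t \|_{L^1(m)} \le C_{\lambda_0} e^{-\lambda_0 t}\|h_0\|_{L^1(m)} + C_{\lambda_0}\int_0^t e^{-\lambda_0(t-\tau)} C\epsilon\, d\tau \le C_{\lambda_0} e^{-\lambda_0 t}\|h_0\|_{L^1(m)} + \frac{C C_{\lambda_0}\epsilon}{\lambda_0}.
$$
This only gives boundedness, not decay, so one must do better by keeping $\|h_\tau\|_{L^1(m)}$ as a genuine factor.

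\textbf{The decisive step — a Gr\"onwall argument on $u(t) := e^{\lambda_0 t}\|h_t\|_{L^1(m)}$.} Estimate the nonlinearity as $\|Q(h_\tau,h_\tau)\|_{L^1(m)} \le C_* \, \|h_\tau\|_{L^1(m)}$, where $C_* = C(\epsilon + \sup_t \|h_t\|_{H^4(\la v\ra^\ell)}\cdot\epsilon + \dots)$ — the crucial claim being that $C_*$ can be made as small as we like by choosing $\epsilon$ small, because \emph{both} slots of $Q$ can be arranged to carry at least one small factor: one slot is directly $\le \epsilon$, and after an interpolation (Sobolev + the uniform $H^4$ bound) the derivative-carrying slot is controlled by $\|h_\tau\|_{L^1(m)}^{\vartheta}\|h_\tau\|_{H^4(\la v\ra^\ell)}^{1-\vartheta}$ with the $L^1(m)$-part ultimately bounded by $\epsilon$. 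Once $\|Q(h_\tau,h_\tau)\|_{L^1(m)} \le C_*\|h_\tau\|_{L^1(m)}$, Duhamel and Theorem~\ref{thm:trou} give
$$
e^{\lambda_0 t}\|h_t\|_{L^1(m)} \le C_{\lambda_0}\|h_0\|_{L^1(m)} + C_{\lambda_0} C_* \int_0^t e^{\lambda_0 \tau}\|h_\tau\|_{L^1(m)}\, d\tau,
$$
i.e. $u(t) \le C_{\lambda_0}\|h_0\|_{L^1(m)} + C_{\lambda_0}C_*\int_0^t u(\tau)\,d\tau$, whence by Gr\"onwall $u(t) \le C_{\lambda_0}\|h_0\|_{L^1(m)}\, e^{C_{\lambda_0}C_* t}$, that is
$$
\|h_t\|_{L^1(m)} \le C_{\lambda_0}\|h_0\|_{L^1(m)}\, e^{-(\lambda_0 - C_{\lambda_0}C_*)t}.
$$
Choosing $\epsilon$ small enough that $C_{\lambda_0}C_* < \lambda_0$ would only give a rate $\lambda_0 - C_{\lambda_0}C_* < \lambda_0$, not the sharp $\lambda_0$ stated in the lemma; to recover the \emph{optimal} exponent one iterates: the above already shows exponential decay at \emph{some} positive rate $\lambda' > 0$, hence $\int_0^\infty e^{\lambda_0 \tau}\|Q(h_\tau,h_\tau)\|_{L^1(m)}\,d\tau$ is finite provided $\|Q(h_\tau,h_\tau)\|_{L^1(m)} \lesssim e^{-\mu\tau}$ with $\mu > \lambda_0$, which follows from the quadratic nature of $Q$ once we have any rate $> \lambda_0/2$ — and $\lambda_0/2 < \lambda_0$, so a single extra bootstrap using $\|Q(h_\tau,h_\tau)\|_{L^1(m)} \le C\,\|h_\tau\|_{L^1(m)}^{2-}\cdot(\text{bounded})$ and the first-round decay $\|h_\tau\|_{L^1(m)} \le C e^{-\lambda'\tau}$ (choosing $\epsilon$ so $\lambda' > \lambda_0/2$, hence $2\lambda' > \lambda_0$) yields $\|Q(h_\tau,h_\tau)\|_{L^1(m)} \le C e^{-2\lambda'\tau}$ with $2\lambda' > \lambda_0$; then Duhamel gives
$$
\|h_t\|_{L^1(m)} \le C_{\lambda_0}e^{-\lambda_0 t}\|h_0\|_{L^1(m)} + C_{\lambda_0}\int_0^t e^{-\lambda_0(t-\tau)} C e^{-2\lambda'\tau}\,d\tau \le C' e^{-\lambda_0 t}\|h_0\|_{L^1(m)},
$$
since $\int_0^t e^{-\lambda_0(t-\tau)}e^{-2\lambda'\tau}d\tau \le \frac{1}{2\lambda'-\lambda_0}e^{-\lambda_0 t}$. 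This delivers the claimed bound with the sharp rate $\lambda_0$ and $C' = C'(\lambda_0, \epsilon, C)$.

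\textbf{Main obstacle.} The delicate point is the bilinear estimate bookkeeping: one must split the two copies of $h$ in $Q(h,h)$ so that in the dominant (second-derivative) term the \emph{convolution slot} carries the small $L^1$-factor ($\le \epsilon$) while the \emph{differentiated slot} is absorbed by the uniform $H^4(\la v\ra^\ell)$ bound — with the weight exponent $\ell = 2\gamma+8+k$ precisely chosen so that $\|\partial_{ij}h\|_{L^1(m\la v\ra^{\gamma+2})} = \|\partial_{ij}h\|_{L^1(\la v\ra^{k+\gamma+2})} \lesssim \|h\|_{H^2(\la v\ra^{k+\gamma+2+3+})} \le \|h\|_{H^4(\la v\ra^\ell)}$ (the extra $+3$ from converting an $L^2$ Sobolev embedding into the $L^1$ weighted norm, plus slack). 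Getting $Q(h_\tau,h_\tau)$ to decay \emph{faster} than $e^{-\lambda_0 t}$ — which is what forces the smallness of $\epsilon$ to be used twice (first to get \emph{some} rate $>\lambda_0/2$, then to leverage the quadratic gain) — is the conceptual crux; everything else is the routine Duhamel/Gr\"onwall machinery.
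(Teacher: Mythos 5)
Your proposal is correct and follows essentially the same route as the paper: Duhamel's formula, the sharp decay of Theorem~\ref{thm:trou} (with rate $\lambda_0$ since $\gamma>0$), Proposition~\ref{prop:EstimationPotentielDur} for the bilinear term, and an interpolation of $\|\partial_{ij}h\|_{L^1(\la v\ra^{k+\gamma+2})}$ against the uniform $H^4(\la v\ra^\ell)$ bound so that the Duhamel integrand is superlinear in $x(s)=\|h_s\|_{L^1(\la v\ra^k)}$ with a small $\epsilon$-power prefactor; your explicit Gr\"onwall-plus-bootstrap closure is exactly the content of the ODE lemma \cite[Lemma 4.5]{Mouhot2} that the paper invokes at that point. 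The only slip is quantitative: the interpolation gives $\|Q(h,h)\|_{L^1(m)}\lesssim \epsilon^{\vartheta}\,x^{1+\vartheta}$ with $\vartheta=1/2$ (the paper arranges $x^{5/4}$), not a genuinely quadratic bound, so the bootstrap threshold is $\lambda'>\lambda_0/(1+\vartheta)$ rather than $\lambda_0/2$ --- harmless, since your first Gr\"onwall step already makes $\lambda'$ arbitrarily close to $\lambda_0$ for $\epsilon$ small.
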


\begin{proof}[Proof of Lemma~\ref{lem:exp-rate}]
By Duhamel's formula for the solution of \eqref{eq:cauchylineaire}, we write, 
$$
h_t = S_{\LL}(t) h_{0} + \int_{t_0}^t S_{\LL}(t-s) Q(h_s,h_s)\, ds.
$$
Using Theorem~\ref{thm:trou} (observe that we can take $\lambda = \lambda_0$ in that theorem since $\gamma \in (0,1]$, see Remark~\ref{rem:trou}) and Proposition~\ref{prop:EstimationPotentielDur}, one deduces
\beqn\label{eq:estL1}
\bal
\| h_t\|_{L^1(\la v\ra^k)} 
&\leq  \| S_{\LL}(t) h_0 \|_{L^1(\la v\ra^k)} + \int_{0}^t \|S_{\LL}(t-s) Q(h_s,h_s)\|_{L^1(\la v\ra^k)}\, ds \\
&\leq C e^{-\lambda_0 t}\| h_0 \|_{L^1(\la v\ra^k)} + C \int_{0}^t e^{-\lambda_0(t-s)}\| Q(h_s,h_s)\|_{L^1(\la v\ra^k)}\, ds \\
&\leq C e^{-\lambda_0 t}\| h_0 \|_{L^1(\la v\ra^k)}  
+C\int_{0}^t e^{-\lambda_0(t-s)} 
 \Big( \| h_s \|_{L^1(\la v \ra^{\gamma})}\| h_s \|_{L^1(\la v \ra^{\gamma+k})} \\
&\qquad\qquad\qquad\qquad \qquad\qquad
+ \| h_s \|_{L^1(\la v \ra^{\gamma+2})}   \| \nabla^2 h_s \|_{L^1(\la v \ra^{\gamma+2+k})} \Big)\, ds .
\eal
\eeqn
We recall the following interpolation inequality from \cite[Lemma B.1]{MiMo-cmp}
$$
\| u \|_{W^{q,1}(\la v \ra^{\alpha})} \le C \| u \|_{W^{q_1,1}(\la v \ra^{\alpha_1})}^{1-\theta} \, 
\| u \|_{W^{q_2,1}(\la v \ra^{\alpha_2})}^{\theta}
$$
with $\theta \in (0,1)$, $\alpha \ge \alpha_1$ and $q \ge q_1$, $q = (1-\theta)q_1 + \theta q_2$ and $ \alpha = (1-\theta)\alpha_1 + \theta \alpha_2 $ with $q,q_1,q_2,\alpha,\alpha_1, \alpha_2 \in \Z$. From this we get
$$
\bal
\| \nabla^2  h \|_{L^1(\la v \ra^{\gamma+2+k})} 
\lesssim \| h \|_{L^1(\la v \ra^k)}^{1/2} \, \| h \|_{W^{4,1} (\la v \ra^{2\gamma + 4 + k})}^{1/2}
\lesssim \| h \|_{L^1(\la v \ra^k)}^{1/2} \, \| h \|_{H^{4} (\la v \ra^{2\gamma + 6 + k})}^{1/2},
\eal
$$
where we used H\"older's inequality in last step.
Gathering last inequality with \eqref{eq:estL1} and using H\"older's inequality again to write 
$$
\| h \|_{L^1(\la v \ra^{\gamma})} \, \| h \|_{L^1(\la v \ra^{\gamma+k})} 
\le \| h \|_{L^1(\la v \ra^{2\gamma + k})}^{1/2} \, \| h \|_{L^1(\la v \ra^{k})}^{3/2},
$$
it follows that
$$
\bal
\| h_t \|_{L^1(\la v\ra^k)} 
&\leq C e^{-\lambda_0 t}\| h_0 \|_{L^1(\la v\ra^k)} + C \int_{0}^t e^{-\lambda_0(t-s)} 
  \| h_s \|_{L^1(\la v \ra^{2\gamma + k})}^{1/2} \, \| h_s \|_{L^1(\la v \ra^{k})}^{3/2} \, ds \\
&\quad   
+C  \int_0^t e^{-\lambda_0(t-s)} \| h_s \|_{H^4(\la v \ra^{2\gamma + 6 + k})}^{1/2} \, \| h_s \|_{L^1(\la v \ra^{k})}^{3/2} \, ds.
\eal
$$
Denoting $x(t) := \| h_t \|_{L^1(\la v\ra^k)}$ and using the assumptions of the lemma, we obtain the following inequality
$$
x(t) \leq C e^{-\lambda_0 t} x(0) 
+ C \epsilon^{1/4} \int_0^t  e^{-\lambda_0(t-s)} x(s)^{1+1/4}\, ds.
$$
Arguing as in \cite[Lemma 4.5]{Mouhot2}, if $x(0)$ and $\epsilon$ are small enough we obtain, for all $t\geq 0$, $x(t) \leq C' e^{-\lambda_0 t} x(0)$, i.e.
$$
\| h_t \|_{L^1(\la v \ra^{k})} \leq C' e^{-\lambda_0 t} \| h_0 \|_{L^1(\la v \ra^{k})}.
$$
\end{proof}

\begin{proof}[Proof of Theorem \ref{thm:rate}]

We can now complete the proof of Theorem \ref{thm:rate}. From Corollary \ref{cor:poly-rate}, we pick $t_0 >0$ such that
$$
\forall\, t\geq t_0 ,\qquad
\| f_t - \mu\|_{L^1(\la v\ra^\ell)} = \| h_t \|_{L^1(\la v\ra^\ell)} \leq \epsilon,
$$
where $\epsilon$ is chosen in Lemma~\ref{lem:exp-rate}. From Theorem~\ref{thm:DV} we have that, for all $t\geq t_0$, 
$$
\| h_t \|_{H^4(\la v \ra^\ell)} \leq  \| f_t \|_{H^4(\la v \ra^\ell)} + \| \mu \|_{H^4(\la v \ra^\ell)} \leq C.
$$
We can then apply Lemma~\ref{lem:exp-rate} to $h_t$ starting from $t_0$, then
$$
\forall\, t\geq t_0 ,\qquad
\| f_t - \mu\|_{L^1(\la v\ra^k)} = \| h_t \|_{L^1(\la v\ra^k)} \leq C' e^{-\lambda_0 t} \| h_{t_0} \|_{L^1(\la v \ra^k)}
\leq C'' e^{-\lambda_0 t}.
$$
This last estimate together with \eqref{eq:convL1} for $ t \in [0 , t_0]$ completes the proof. 
\end{proof}

\bibliographystyle{acm}
\bibliography{bib-spectral}

\end{document}